\theoremstyle{plain}
\newtheorem{thm}{Theorem}
\newtheorem{cor}{Corollary}
\newtheorem{lem}{Lemma}
\newtheorem{prop}{Proposition}
\theoremstyle{definition}
\newtheorem{defin}{Definition}
\theoremstyle{remark}
\newtheorem{rem}{Remark}
\theoremstyle{remark}
\newtheorem{notat}{Notation}
\theoremstyle{remark}
\newtheorem{example}{Example}
\theoremstyle{definition}
\newtheorem{assumption}{Assumption}
\newcommand{\N}{\mathbb{N}}
\newcommand{\Z}{\mathbb{Z}}
\newcommand{\R}{\mathbb{R}}
\newcommand{\C}{\mathbb{C}}
\newcommand{\SCS}{\mathcal{S}}
\newcommand{\HH}{\mathcal{H}}
\newcommand{\A}{\mathcal{A}}
\begin{document}
 

\title[Natural extensions and Gauss measures]{Natural extensions and Gauss measures for piecewise homographic continued fractions}

\author{ Pierre Arnoux}
\address{Institut de Math\'ematiques de Marseille (UMR7373),
          Site Sud, Campus de Luminy, Case 907
          13288 MARSEILLE Cedex 9
          France}
\email{pierre\@ pierrearnoux.fr}
\author{Thomas A. Schmidt}
\address{Oregon State University\\ Corvallis, OR 97331}
\email{toms@math.orst.edu}

\keywords{ Natural extensions,  continued fractions,   iterated function system}
\subjclass[2010]{   37E05 (11K50 37A45)}
\date{15 September  2017}
\thanks{The first author thanks the ANR projects SubTile and FAN for their support}
\thanks{Both of us take great pleasure in thanking Professor Mizutani for his friendly hospitality
during work on this project.}

 
\begin{abstract}  We give a heuristic method to solve explicitly for an absolutely continuous invariant measure for 
a piecewise differentiable, expanding  map of a compact subset $I$ of Euclidean space $\mathbb R^d$.  The method consists of constructing a skew product family of maps on $ I \times \mathbb R^d$, which has an attractor. 
Lebesgue measure is invariant for the skew product family restricted to this attractor.   Under reasonable measure theoretic conditions,  integration over the fibers gives the desired measure on $I$.  Furthermore, the attractor system is then the natural extension of the original map with this measure.  We illustrate this method by relating it to various results in the literature.
\end{abstract}
 
\maketitle

\tableofcontents

\section{Introduction} In a letter to Laplace in 1812, see the Appendix, Gauss  \cite{G}  gave the explicit formula for the invariant measure associated to regular continued fractions.  This is the measure that we refer to as the standard Gauss measure.    To this day,  it remains unclear how Gauss (using ``very simple reasoning") found this measure.  Keane \cite{Keane} suggests the possibility that Gauss used  a version of the natural extension of the interval map that was not explicitly introduced into the literature until 1977,  by Nakada-Ito-Tanaka \cite{NIT}.  This latter system was used for ergodic theoretic results by Bosma-Jager-Wiedijk (1983), \cite{BJW}.  

 Indirectly related to this  were the works showing that the Gauss system is a factor of the geodesic flow on the unit tangent bundle of the modular surface:  Artin \cite{Art} in 1924 seems to be the first to have shown this connection, followed among others by Adler-Flatto in 1984 (revisited in \cite{AF}), Series \cite{Se} in 1985, and Arnoux \cite{Ar} in 1994 (see also \cite{AN}, \cite {AL}).  A large number of authors have pursued these and related matters,   see for example    \cite{GroechenigHaas96}, \cite{MayerStroemberg08}, \cite{MayerMuehlenbruch10},  \cite{HilgertPohl09}.    For much of this history,  further motivation,  and also the work of S.~Katok and co-authors using various means to code geodesics on the modular surface,  see  \cite{KatokUgarcovici07}.

  We do not pretend to suggest that Gauss used the method we present here.  Still,  in the modern mathematical world this is an applicable, easily used,  method.      Indeed,  in Section~\ref{s:classExamples}, we give various examples of interval maps and higher dimensional maps for which our method determines an invariant density (thus a Gauss measure) and the corresponding  natural extension.     In particular, one easily recovers the natural extension as given by  Nakada-Ito-Tanaka and thus the invariant measure is rediscovered, see Subsection~\ref{ss:GaussFarey}.  

   In brief, we show that if an expanding piecewise M\"obius interval map is such that a naturally associated two dimensional system has a positive invariant Lebesgue measure,  then this system is the natural extension  of the interval map whose Gauss measure is the marginal measure of the two dimensional system.  In fact, the approach is not limited in dimension, see \cite{AN}, \cite{AL}.
    
   It seems to be part of the folklore that  natural extensions for certain interval maps arise as attractors.  In particular,  Katok and Ugarcovici \cite{KatokUgarcovici10}  explicitly prove that the planar models that they determine for the natural extensions  of the family of maps that they study are indeed attactors.     
 Our approach is to view the M\"obius functions as defining what can reasonably called a parametrized Iterated Function System (pIFS) on the interval cross the real line, and, generalizing Hutchinson's Theorem \cite{H}, show that this pIFS has a fixed compact set projecting onto the line.   This compact set gives the aforementioned two dimensional system,  and thus the Gauss measure for the interval map.      
 
     The pIFS arises in the following manner.   Suppose that our expanding continued fraction map $T$ is locally given  by a M\"obius transformation of matrix 
$M = \begin{pmatrix} 
a&b\\c&d\end{pmatrix}$ with determinant one, and consider the transformation  $\widetilde T$  locally defined by 
\begin{equation}\label{eq:2Daction}
{\mathcal  T}_M: (x,y) \to (\,M \cdot x,(cx+d)^2 y- c(cx+d)\,)\,.
\end{equation}  
This has  Jacobian matrix  determinant whose value is one and thus  each branch of 
 ${\mathcal  T}_M$ preserves  Lebesque measure.    Under a boundedness condition on the set of $c(cx+d)$, we prove  that there is a unique compact set $K$,  arising as an appropriate fixed point, 
 such that   $K$ equals the closure  of $\widetilde T(K)$.  If furthermore, $K$ and  $\widetilde T(K)$ have the same strictly positive measure, then an invariant density for $T$ is given by taking the 
 measure of the fiber of $K$ over any $x$ in the domain of $T$,  and  $\widetilde T(K)$ on $K$ defines a natural extension for $T$ and this invariant measure. 
 
As we have already shown in \cite{AS2}, in practice one can plot the orbit of almost any point under $\widetilde T(K)$ and use the resulting figure to compute equations for the boundary of $K$ and the invariant density.    Again, see Section~\ref{s:classExamples} for examples.    

\subsection{Main results}
We state and prove our fixed point theorem in a more general setting.   
   
\begin{thm} Suppose that $I$ is  a compact Hausdorff metric space, and $Y$ is a complete metric space.   
Let $\HH$ be the set of non-empty compact subsets of $Y$ with the usual Hausdorff metric, $d_H$.   Let $\SCS$ be the space of upper semi-continuous maps from $I$ to $\HH$,  $d_{\SCS}(K,K'):=\sup\{d_H\left(K(x), K'(x)\,\right)| x\in I\}$. 

Given
\begin{enumerate}
\item[(i.)]  a countable cover $\{I_{\alpha}\}_{\alpha\in\A} $ of $I$ by compact sets, 
\item[(ii.)]  a family $T$ of continuous maps $T_{\alpha} : I_{\alpha}\to I$ such that the family $\{T_{\alpha}\left(I_{\alpha}\right)\}_{\alpha\in\A} $ is a cover of $I$,  and 
\item[(iii.)] a uniformly contracting family of continuous maps $S_{\alpha}: I_{\alpha}\times Y \to Y$, that is totally bounded, 
\end{enumerate} 

\medskip 
define  
\begin{enumerate}
\item[(a.)]  for each $\alpha$,    
$\widetilde T_{\alpha} : I_{\alpha}\times Y\to I\times Y$ by $\widetilde T_{\alpha}(x,y)=\left(T_{\alpha}(x), S_{\alpha}(x,y)\right)$,  and 
\medskip

\item[(b.)]   $\Theta : \SCS\to \SCS; \,$ by $\Theta(K)=\overline{\cup_{\alpha \in \A} \; \widetilde T_\alpha(K_\alpha)}$.  
\end{enumerate} 

\bigskip 
Then the map $\Theta$ has a unique fixed point. 
\end{thm}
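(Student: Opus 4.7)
The plan is to verify that $(\SCS, d_{\SCS})$ is a complete metric space and that $\Theta$ is a strict contraction on it, then invoke Banach's fixed-point theorem; this is the natural Hutchinson-style argument adapted to a parametrized IFS. Completeness is the easier half: since $Y$ is complete, $(\HH, d_H)$ is complete by a classical result, and a $d_\SCS$-Cauchy sequence $(K_n)$ is uniformly Cauchy in $x$, so $K_n(x) \to K_\infty(x)$ in $\HH$ at each $x$ with $d_\SCS(K_n, K_\infty) \to 0$. Upper semicontinuity of $K_\infty$ follows from a standard $\varepsilon/3$ argument: choose $n$ large so that $d_\SCS(K_n, K_\infty) < \varepsilon/3$, then use the USC of $K_n$ to find a neighborhood $V$ of $x$ on which $K_n(x') \subseteq B_{\varepsilon/3}(K_n(x))$; chaining inclusions gives $K_\infty(x') \subseteq B_\varepsilon(K_\infty(x))$ on $V$.

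The delicate step is to show that $\Theta$ maps $\SCS$ into itself. Since each $K \in \SCS$ is USC and compact-valued over the compact $I$, the restricted graph $K_\alpha = \{(x,y) : x \in I_\alpha,\ y \in K(x)\}$ is a compact subset of $I_\alpha \times Y$, and $\widetilde T_\alpha(K_\alpha)$ is therefore compact by continuity of $\widetilde T_\alpha$. The union $\bigcup_\alpha \widetilde T_\alpha(K_\alpha)$ need not be closed when $\A$ is infinite, but the total-boundedness hypothesis on $\{S_\alpha\}$ forces the $y$-coordinates of its closure in $I \times Y$ to lie in a totally bounded, hence relatively compact, subset of $Y$. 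Each fiber $\Theta(K)(z)$ is thus closed inside a compact set and so compact; upper semicontinuity of $z \mapsto \Theta(K)(z)$ then follows by a subsequence argument from the closedness of the graph together with this relative compactness. I expect this interplay between the closure of a countable union and the USC compact-valued structure to be the main technical obstacle, with the total-boundedness hypothesis doing the essential work.

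For the contraction, let $c < 1$ be the common Lipschitz constant of the family $\{S_\alpha(x, \cdot)\}$. Fix $z \in I$ and $w \in \Theta(K)(z)$, and write $w = \lim_n S_{\alpha_n}(x_n, y_n)$ with $T_{\alpha_n}(x_n) \to z$ and $y_n \in K(x_n)$. For each $n$, choose $y'_n \in K'(x_n)$ with $d_Y(y_n, y'_n) \le d_\SCS(K, K') + 1/n$ and set $w'_n = S_{\alpha_n}(x_n, y'_n)$; by total boundedness, pass to a subsequence along which $w'_n \to w'$. Then $(z, w') \in \overline{\bigcup_\alpha \widetilde T_\alpha(K'_\alpha)}$, so $w' \in \Theta(K')(z)$, while $d_Y(w, w') \le c\, d_\SCS(K, K')$ in the limit. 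Symmetry yields $d_\SCS(\Theta(K), \Theta(K')) \le c\, d_\SCS(K, K')$, and Banach's theorem delivers the unique fixed point.
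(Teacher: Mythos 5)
Your proposal is correct and follows essentially the same route as the paper: completeness of $(\SCS,d_{\SCS})$ via an $\varepsilon/3$ argument on fibers, the standing total-boundedness hypothesis to ensure $\Theta$ lands in $\SCS$, a contraction estimate with the uniform constant $c<1$, and the Banach fixed point theorem. The only cosmetic difference is that you obtain the contraction bound by a sequence/subsequence argument on individual points of the fibers, whereas the paper packages the same estimate as a lemma on Hausdorff distances of closures of unions applied fiberwise.
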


  A restricted version of this theorem has already been applied in a paper by Arnoux, Mizutani, Sellami \cite{AMS}.   

\bigskip 
    
The above leads to the calculation of explicit models of natural extensions in the following manner.
    
\begin{thm} Let $I$ a compact subset of $\R^d$, which is the closure of its interior, and let $Y=\R^d$.    Given (i-iii) as above, with 
\begin{enumerate}
\item[(i$'$.)] each $I_{\alpha}$ the closure of its interior, and   all non-trivial intersections $I_{\alpha} \cap I_{\beta}$ of Lebesgue measure zero, 

\item[(ii$'$.)]  each $T_{\alpha} : I_{\alpha}\to T_{\alpha}(I_{\alpha})\subset I$ a diffeomorphism, and the collection of Jacobian matrices  $T'_{\alpha}(x)$  uniformly expanding,

\item[(iii$'$.)]   each $S_{\alpha}$ given by $S_{\alpha}(x,y) = {{T'_{\alpha}}^{*}}^{-1}(x).y +f_{\alpha}(x)$, for some 
family $\{f_{\alpha}\}$ of piecewise continuous and uniformly bounded maps $f_{\alpha}: I_{\alpha}\to \R^d$.  Here 
 ${{T'_{\alpha}}^{*}}^{-1}$ denotes the inverse of the transpose of $T'_{\alpha}$. 
\end{enumerate} 

    Let $K$ be the unique fixed point of $\Theta$.   Let $\mu$   be the marginal measure for  $m$ on $K$ by way of $\pi:K\to I$.  Denote by  $ \mathcal B,\widetilde{\mathcal B}$   the  Borel $\sigma$-algebras of $I, K$, respectively.  
    
\bigskip     
    If the Lebesgue measure $m(K)$ of $K$ is not zero and $K \setminus \widetilde T(K)$ has measure zero, then $(K,\widetilde{\mathcal B}, m, \widetilde T)$ is a model for the natural extension of $(I, \mathcal B, \mu, T)$.
\end{thm}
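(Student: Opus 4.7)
The plan is to verify in turn the four defining conditions of a natural extension: the semi-conjugacy $\pi\circ\widetilde T = T\circ\pi$, preservation of $m$ by $\widetilde T$, essential invertibility of $\widetilde T$, and the generating property $\bigvee_{n\ge 0}\widetilde T^{\,n}\pi^{-1}(\mathcal B) = \widetilde{\mathcal B}$ modulo $m$. The factor-map identity is immediate from the formula $\widetilde T_\alpha(x,y) = (T_\alpha(x),S_\alpha(x,y))$. Measure preservation of each branch follows from the affine form in (iii$'$): the differential of $\widetilde T_\alpha$ is block-lower-triangular with diagonal blocks $T'_\alpha(x)$ and ${{T'_\alpha}^{*}}^{-1}(x)$, so its Jacobian determinant has absolute value one and each $\widetilde T_\alpha$ preserves Lebesgue measure on $I_\alpha\times Y$.

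Next, I would establish essential disjointness of the branch images $\widetilde T_\alpha(K_\alpha)$, where $K_\alpha := K\cap(I_\alpha\times Y)$. The chain of equalities $m(K) = \sum_\alpha m(K_\alpha) = \sum_\alpha m\bigl(\widetilde T_\alpha(K_\alpha)\bigr)$ (from (i$'$) and the Jacobian step) together with $m\bigl(\widetilde T(K)\bigr) = m(K)$ (from the hypothesis $m(K\setminus\widetilde T(K)) = 0$) forces $m\bigl(\widetilde T_\alpha(K_\alpha)\cap\widetilde T_\beta(K_\beta)\bigr) = 0$ for $\alpha\neq\beta$. Since each $\widetilde T_\alpha$ is injective ($T_\alpha$ is a diffeomorphism and $y\mapsto S_\alpha(x,y)$ is an affine bijection of $\R^d$), this yields an $m$-a.e.\ defined inverse $\widetilde T^{-1}:\widetilde T(K)\to K$, and for any measurable $A\subset K$,
\[
m(\widetilde T^{-1}A) = \sum_\alpha m\bigl(\widetilde T_\alpha^{-1}(A\cap\widetilde T_\alpha(K_\alpha))\bigr) = m(A\cap\widetilde T(K)) = m(A),
\]
so $\widetilde T$ preserves $m$. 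The invariance of $\mu = \pi_*m$ under $T$ is then automatic from the factor-map identity.

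The heart of the proof is the generating condition. I would show that, for $m$-a.e.\ $z\in K$, the projected pre-orbit $x_{-n}(z) := \pi\bigl(\widetilde T^{-n}z\bigr)$, $n\ge 0$, determines $z$. At such a $z$ the essential disjointness selects a unique branch history $(\alpha_n)$, which is in turn determined mod $m$ by the sequence $(x_{-n})$ because the overlaps $I_\alpha\cap I_\beta$ are $m$-null. Writing $\widetilde T^{-n}(z) = (x_{-n},y_{-n})$ and iterating the recursion $y_{-(k-1)} = S_{\alpha_{k-1}}(x_{-k},y_{-k})$, one obtains
\[
y \;=\; S_{\alpha_0}\bigl(x_{-1},\,S_{\alpha_1}(x_{-2},\,\ldots\,S_{\alpha_{n-1}}(x_{-n},\,y_{-n})\ldots)\bigr).
\]
Each $S_\alpha(x,\cdot)$ is affine with linear part ${{T'_\alpha}^{*}}^{-1}(x)$ of operator norm at most $\lambda^{-1}<1$, by uniform expansion in (ii$'$); hence the above composition contracts the $y_{-n}$ dependence by at most $\lambda^{-n}$. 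Combined with uniform boundedness of the fibres of $K$ (inherited from the total-boundedness condition (iii)), this collapses the set of $y$'s compatible with a fixed projected past to a single point, yielding $\bigvee_n\widetilde T^{\,n}\pi^{-1}(\mathcal B) = \widetilde{\mathcal B}$ mod $m$.

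The main obstacle I anticipate is the measure-theoretic bookkeeping that makes the backward orbit and the branch history well-defined $m$-almost everywhere, despite the $I_\alpha$ overlapping and $\A$ possibly being countably infinite: the step from essential disjointness of images to a bona fide a.e.\ inverse with the right cocycle of branch choices underlies both paragraphs two and three. Once this is cleanly set up, the Jacobian identity and the uniform contraction of the $S_\alpha(x,\cdot)$ are routine, and the natural-extension property falls out.
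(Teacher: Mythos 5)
Your proposal is correct and follows essentially the same route as the paper: the unit-Jacobian/block-triangular computation and the counting argument $\sum_\alpha m(\widetilde T_\alpha(K_\alpha))=m(K)=m(\widetilde T(K))$ to get essential disjointness and a.e.\ invertibility are exactly the paper's first two paragraphs, and your generating-condition argument via the uniform contraction of the $S_\alpha(x,\cdot)$ together with boundedness of the fibres is the mirror image of the paper's argument that two points of $K$ over the same bi-infinite projected orbit would have backward fibre-distances growing like $C^n$, contradicting compactness of $K$. No substantive difference in approach.
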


\section{Basic definitions} 

\subsection{Hausdorff distance on compact sets}  

Let $(Y,d) = (Y, d_Y)$ be a complete metric space. If $A$ is a nonempty subset of $Y$, and $y\in Y$, we define the distance of $y$ to $A$ as $d(y,A):=\inf \{d(y,z)\,|\, z\in A\}$. For two subsets $A,B$ of $Y$, we define $\delta(A,B):=\sup_{y\in A} d(y,B)$. This is obviously not symmetric, since $\delta(A,B)=0$ if $A\subset B$, but symmetrization allows us to define the Hausdorff distance.

\begin{defin}The Hausdorff pseudo-distance $d_H$ between two nonempty subsets $A,B$ of $Y$ is defined by $d_H(A,B):=\max (\, \delta(A,B), \delta(B,A)\,)$. 
\end{defin}

Alternatively,  we introduce the following notation,  which will be useful later. 
\begin{notat}\label{n:aEpsilon}  For any nonempty set $A\subset Y$, define  
\[A_{\epsilon}=\{y\in Y\,|\, d(y,A)\le \epsilon\}.\]
\end{notat}

Thus,  $d_H(A,B)$ is the greatest lower bound of the $\epsilon$ such that $A\subset B_{\epsilon}$ and $B\subset A_{\epsilon}$.

\begin{lem} \label{lem:trans} Given $\epsilon, \delta >0 $ and  nonempty sets  $A,B,C$ such that $A\subset B_{\epsilon}$ and $B\subset C_{\delta}$, one has  $A\subset C_{\epsilon+\delta}$.
\end{lem}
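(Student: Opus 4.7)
The plan is to chase points through the sets and use the triangle inequality in $(Y,d)$, exploiting the fact that $d(y,A)$ is an infimum rather than a minimum, so a small $\eta$-slack will appear and then be sent to zero.

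Concretely, I would fix an arbitrary $a\in A$ and show $d(a,C)\le \epsilon+\delta$, which by Notation~\ref{n:aEpsilon} is exactly the statement $a\in C_{\epsilon+\delta}$. Since $A\subset B_{\epsilon}$, the hypothesis gives $d(a,B)\le \epsilon$. For any $\eta>0$, the definition of infimum produces a point $b\in B$ with $d(a,b)<\epsilon+\eta$. Since $B\subset C_{\delta}$, we have $d(b,C)\le \delta$, so in turn there is a $c\in C$ with $d(b,c)<\delta+\eta$. The triangle inequality in $(Y,d)$ then gives
\[
d(a,c)\;\le\; d(a,b)+d(b,c)\;<\;\epsilon+\delta+2\eta,
\]
hence $d(a,C)<\epsilon+\delta+2\eta$. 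Letting $\eta\downarrow 0$ yields $d(a,C)\le \epsilon+\delta$, so $a\in C_{\epsilon+\delta}$, and since $a\in A$ was arbitrary this gives $A\subset C_{\epsilon+\delta}$.

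There is essentially no obstacle here; the only point requiring any care is to remember that $d(a,B)\le\epsilon$ does not guarantee a witness $b\in B$ with $d(a,b)\le\epsilon$ (the infimum need not be attained), which is why the auxiliary $\eta>0$ and the subsequent limit are necessary. With that small bookkeeping, the argument is a direct two-step application of the triangle inequality.
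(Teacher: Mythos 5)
Your proof is correct and follows the same two-step triangle-inequality argument as the paper; in fact it is slightly more careful, since the paper's version asserts outright that one can find $z\in B$ with $d(y,z)\le\epsilon$, glossing over the fact that the infimum defining $d(y,B)$ need not be attained. Your $\eta$-slack handles exactly that point, so nothing further is needed.
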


\begin{proof} Let $y\in A$. Then we can find $z\in B$ such that $d(y,z)\le \epsilon$, and $w\in C $ such that $d(z,w)\le \delta$, hence $d(y,w)\le \epsilon+\delta$. This proves that $A\subset C_{\epsilon+\delta}$.
\end{proof}

 This implies that $d_H$ satisfies the metric inequality $d_H(A,C)\le d_H(A,B)+d_H(B,C)$; but the  pseudo-distance $d_H(A,B)$  is not a  true distance function, since it takes the value zero when evaluated at a set and its closure.  However, it becomes a distance when restricted to nonempty compact subsets of  $Y$.

\begin{notat}
We denote by $\HH$ the set of nonempty compact subsets of $Y$.
\end{notat}

The following proposition is classical, see \cite{Barn}, Theorem~ 7.1.

\begin{prop}
The set $\HH$ endowed with the Hausdorff pseudo-distance   $d_H$ is a complete metric space.
\end{prop}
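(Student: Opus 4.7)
The plan splits into two standard pieces: first verify that $d_H$ restricted to $\HH$ is a genuine metric, then prove completeness.

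For the metric property, symmetry is built into the definition, and the triangle inequality is already supplied by Lemma~\ref{lem:trans}; only positive definiteness requires work. If $A, B \in \HH$ satisfy $d_H(A, B) = 0$, then $A \subset B_\epsilon$ for every $\epsilon > 0$, so each $a \in A$ has $d(a, B) = 0$. Since $B$ is compact, and therefore closed, this forces $a \in B$. By symmetry $A = B$.

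For completeness, given a Cauchy sequence $(K_n)$ in $\HH$, I would take as candidate limit
\[
K := \bigcap_{N \ge 1} \overline{\bigcup_{n \ge N} K_n},
\]
equivalently the set of $y \in Y$ realisable as $\lim_k y_{n_k}$ for some strictly increasing $(n_k)$ with $y_{n_k} \in K_{n_k}$. Non-emptiness comes from a diagonal construction: extract a subsequence with $d_H(K_{n_k}, K_{n_{k+1}}) < 2^{-k}$, start from any $y_{n_1} \in K_{n_1}$, and inductively pick $y_{n_{k+1}} \in K_{n_{k+1}}$ within $2^{-k}$ of $y_{n_k}$; the resulting sequence is Cauchy in the complete space $Y$ and its limit belongs to $K$. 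The same telescoping idea yields $K_n \subset K_\epsilon$ for $n$ sufficiently large: starting from $y \in K_n$ one builds a Cauchy sequence $y_{n_k} \in K_{n_k}$ with $y_{n_1} = y$ whose total displacement is less than $\epsilon$. The reverse inclusion $K \subset (K_n)_\epsilon$ is immediate from the definition of $K$ combined with continuity of $d(\cdot, K_n)$.

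The main obstacle is showing that $K$ is compact. Since $K$ is closed by construction and $Y$ is complete, this reduces to total boundedness. Given $\epsilon > 0$ I would choose $N$ with $d_H(K_m, K_n) < \epsilon/2$ for $m, n \ge N$, cover the compact set $K_N$ by finitely many $\epsilon/2$-balls with centers $c_1, \ldots, c_M$, and argue via Lemma~\ref{lem:trans} that these centers form an $\epsilon$-net for $K$: any $y \in K$ is the limit of some $y_{n_k} \in K_{n_k}$ with $n_k \ge N$ eventually, each such $y_{n_k}$ is within $\epsilon/2$ of a center $c_{i(k)}$ via approximation inside $K_N$, and finiteness of the centers allows a further extraction on which $i(k)$ is constant, placing $y$ within $\epsilon$ of that fixed center. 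The uniform control of the various diagonal extractions in this step, and in the matching $K_n \subset K_\epsilon$ argument above, is where I expect the real technical care to be required.
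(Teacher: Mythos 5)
Your proof is correct. The paper does not prove this proposition at all --- it simply cites it as classical (Barnsley, \emph{Fractals Everywhere}, Theorem~7.1) --- and your argument is precisely the standard one behind that citation: positive definiteness from closedness of compact sets, the triangle inequality from Lemma~\ref{lem:trans}, the candidate limit $K=\bigcap_N\overline{\bigcup_{n\ge N}K_n}$, non-emptiness and the inclusion $K_n\subset K_\epsilon$ by telescoping diagonal sequences, the reverse inclusion directly, and compactness via closedness plus total boundedness. The only thing to tidy in a full write-up is the constant bookkeeping in the total-boundedness step (your centers form a $2\epsilon$-net rather than an $\epsilon$-net as stated, which of course still suffices); otherwise there is no gap.
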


We will make use of the following lemma, whose elementary proof we leave to the reader.

\begin{lem} \label{lem:union}Let $K$ be a compact set, and let $(A_{\alpha})_{{\alpha}\in \A}$  and $(B_{\alpha})_{{\alpha}\in \A}$  be two sequences of nonempty compact subsets of $K$ indexed by the same countable (possibly finite) set $\A$, such that $d_H(A_{\alpha},B_{\alpha})\le \epsilon$ for all $ {\alpha}\in   \A$. Let $A$ (resp. $B$) be the closure of the union of the $A_{\alpha}$ (resp. $B_{\alpha}$).  Then  $d_H(A,B)\le\epsilon$.
\end{lem}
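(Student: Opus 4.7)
The plan is to unfold the definition of $d_H$ via the $\epsilon$-thickening notation $C_{\epsilon}$ from Notation \ref{n:aEpsilon} and exploit the fact that every such thickening is a closed set. By the symmetry of the statement in $A$ and $B$ it suffices to prove $A \subset B_{\epsilon}$, since $B \subset A_{\epsilon}$ follows by interchanging the roles of the $A_{\alpha}$ and $B_{\alpha}$. For each index $\alpha \in \A$, the hypothesis $d_H(A_{\alpha},B_{\alpha}) \le \epsilon$ gives $A_{\alpha} \subset (B_{\alpha})_{\epsilon}$; since $B_{\alpha} \subset B$ and $\epsilon$-thickening is monotone with respect to inclusion (equivalently, by applying Lemma \ref{lem:trans} with ``$\delta = 0$''), this yields $A_{\alpha} \subset B_{\epsilon}$. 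Taking the union over $\alpha$ gives $\bigcup_{\alpha \in \A} A_{\alpha} \subset B_{\epsilon}$.

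To conclude, one observes that $y \mapsto d(y,B)$ is continuous (in fact $1$-Lipschitz), so $B_{\epsilon} = \{y \in Y \mid d(y,B) \le \epsilon\}$ is a closed subset of $Y$. Hence $A = \overline{\bigcup_{\alpha \in \A} A_{\alpha}} \subset B_{\epsilon}$, i.e., $\delta(A,B) \le \epsilon$. The symmetric argument gives $\delta(B,A) \le \epsilon$, and therefore $d_H(A,B) = \max(\delta(A,B),\delta(B,A)) \le \epsilon$ as claimed.

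Since the argument is this short, there is no serious ``main obstacle.'' The only bookkeeping needed is to check that $d_H(A,B)$ is actually defined as a Hausdorff distance between nonempty compact sets: nonemptiness of $A$ and $B$ is immediate from that of the $A_{\alpha}$ and $B_{\alpha}$, while compactness is automatic because $A$ and $B$ are closed subsets of the compact set $K$. The countability hypothesis on $\A$ is not used; the same proof goes through for arbitrary index sets, the countability being needed only elsewhere in the paper (e.g.\ to ensure measurability in the applications of this lemma).
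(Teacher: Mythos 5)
Your proof is correct; the paper explicitly leaves this lemma's ``elementary proof'' to the reader, and your argument via the closed $\epsilon$-thickening $B_{\epsilon}$ is precisely the intended one. Your side remarks are also accurate: $A$ and $B$ are nonempty compact as closed subsets of $K$, and countability of $\A$ is indeed not needed for this statement.
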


\subsection{Upper semi-continuous functions} 
 
Let $I$ be a compact  Hausdorff space, and let $\pi:I\times Y\to I$ be the projection on the first coordinate. We now consider a compact subset  $K\subset I\times Y$ such that $\pi(K)=I$.   

One can view $K$  as a fiber bundle over $I$, whose fiber over $x$ we denote by $K(x)$.   Since each fiber is compact, we mainly view 
$K$ as a map  $K : I\to \HH, \quad x\mapsto K(x) :=  \{y\in  Y |(x,y)\in K\}$. This map has a remarkable property: it is upper semi-continuous. Let us recall the appropriate definition. 

\begin{defin} Let $I$ be a topological space, and let $(\HH, \le)$ be an ordered space in which any subset which has an upper (resp. lower) bound admits a least upper (resp. greatest lower) bound. Let $f: I\to \HH$ be a map, and suppose that $f$ is bounded on a neighborhood $V\ni x_0$. The limit superior of $f$ when $x$ tends to $x_0$ is the greatest lower bound over all neighborhoods $U\subset V$ of $x_0$ of the least upper bound for all $x\in U$ of $f(x)$.  In standard notation, we have  $\limsup_{x\to x_0}f(x)= \land_{U\subset V; x_0\in U} \lor_{x\in U}f(x)$.
\end{defin}

Note that   $f(x_0)\le \limsup_{x\to x_0}f(x)$. We can now define upper semi-continuous functions. 

\begin{defin} Let $I$ be a topological space, and let $\HH$ be an ordered space in which any subset which has an upper (resp. lower) bound admits a least upper (resp. greatest lower) bound.  We say that a map $f: I\to \HH$ is upper semi-continuous if for every $x_0\in I$ we have   $\limsup_{x\to x_0}f(x)=f(x_0)$.
\end{defin}

We return to the setting that $\HH$ is the space of non-empty compact subsets of a complete metric space $Y$.   
The inclusion relation on sets gives an order on $\HH$.  The greatest lower bound of a subset of $\HH$ is the intersection of the corresponding subsets of $Y$ if it is not empty, and the least upper bound is given by the closure of the union, if it is compact.  Thus,  $\HH$ satisfies the condition of the definition.   
 
  We restrict  to the case that  $I$  is a metric space.  
Using the notation introduced in Notation~\ref{n:aEpsilon}, we can reformulate the definition in this setting.  
 
\begin{lem} A map $K: I\to \HH$ is upper semi-continuous if and only if for any $x\in I$ and any $\epsilon >0$ there exists $\delta>0$ such that if $d(x,x')<\delta$, then $K(x')\subset K(x)_{\epsilon}$.
\end{lem}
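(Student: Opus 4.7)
The plan is to unwind the definition of upper semi-continuity in the concrete ordered space $\HH$ and then establish the two implications separately. Since $\HH$ is ordered by inclusion, the least upper bound of a bounded family is the closure of its union and the greatest lower bound is the intersection, so upper semi-continuity of $K$ at $x_0$ amounts to
\[
\bigcap_{U \ni x_0}\; \overline{\bigcup_{x\in U} K(x)} \;\subseteq\; K(x_0),
\]
the opposite inclusion being automatic. I will keep in mind throughout that the very definition of $\limsup$ tacitly requires $K$ to be bounded on some neighborhood $V$ of $x_0$; in $\HH$ this means precisely that $\overline{\bigcup_{x\in V}K(x)}$ is compact in $Y$.

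For the forward direction I would argue by contradiction. If the $\epsilon$--$\delta$ condition fails at $x_0$, pick $\epsilon>0$ and sequences $x_n\to x_0$, $y_n\in K(x_n)$ with $d(y_n,K(x_0))>\epsilon$. Eventually $x_n\in V$, so the $y_n$ lie in the compact set $\overline{\bigcup_{x\in V}K(x)}$, and one extracts a convergent subsequence $y_{n_k}\to y^\ast$. Continuity of $z\mapsto d(z,K(x_0))$ gives $d(y^\ast,K(x_0))\ge\epsilon$, so $y^\ast\notin K(x_0)$. On the other hand, for every neighborhood $U$ of $x_0$ one has $x_{n_k}\in U$ eventually, so $y^\ast\in\overline{\bigcup_{x\in U}K(x)}$; this puts $y^\ast$ in the intersection displayed above but not in $K(x_0)$, contradicting upper semi-continuity. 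For the reverse direction, fix $y$ in that intersection and any $\epsilon>0$; the hypothesis supplies $\delta>0$ with $\bigcup_{x'\in B(x_0,\delta)}K(x')\subseteq K(x_0)_\epsilon$. Since $K(x_0)_\epsilon=\{z:d(z,K(x_0))\le\epsilon\}$ is a closed sublevel set of a continuous function, the inclusion is preserved under closure, so $d(y,K(x_0))\le\epsilon$. Letting $\epsilon\downarrow 0$ and using closedness of $K(x_0)$ yields $y\in K(x_0)$.

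The main point requiring care, rather than a deep obstacle, is the subsequence extraction in the forward direction: one must explicitly invoke the local boundedness tacitly built into the definition of $\limsup$ in $\HH$ so that the offending points $y_n$ live in a compact subset of $Y$. Without this observation the diagonalisation collapses, since $Y$ is assumed only to be a complete metric space and need not be locally compact. Everything else reduces to unwinding definitions together with the elementary fact that $\epsilon$-neighbourhoods of compact sets are closed.
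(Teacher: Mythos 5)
The paper leaves this proof to the reader, so there is no official argument to compare against; your proof is correct in substance, and your forward direction is in the same spirit as the paper's subsequent proof that compact subsets of $I\times Y$ correspond to upper semi-continuous maps (extract a convergent subsequence of offending points $y_n$ and use continuity of $z\mapsto d(z,K(x_0))$). The identification of $\limsup$ with $\bigcap_{U\ni x_0}\overline{\bigcup_{x\in U}K(x)}$ and the observation that the reverse inclusion is automatic are both right. The one loose end is in the reverse direction: the paper's $\limsup$ is only defined when $K$ is bounded near $x_0$ in the order on $\HH$, i.e.\ when $\overline{\bigcup_{x\in V}K(x)}$ is compact for some neighbourhood $V$, so that the least upper bounds exist in $\HH$. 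You rightly flag and use this in the forward direction (where it is part of the hypothesis of upper semi-continuity), but in the reverse direction you manipulate the displayed intersection without checking that the $\epsilon$--$\delta$ condition supplies this boundedness; note that $K(x_0)_\epsilon$ need not be compact when $Y$ is merely complete, so the inclusion $\bigcup_{x'\in B(x_0,\delta)}K(x')\subseteq K(x_0)_\epsilon$ does not by itself yield compactness of the closure. The gap closes because the condition is assumed at \emph{every} point of the compact space $I$: given $\eta>0$, cover $I$ by finitely many balls $B(x_n,\delta_n)$ on which $K(x')\subseteq K(x_n)_{\eta/2}$, and cover each compact $K(x_n)$ by finitely many $\eta/2$-balls; this shows $\bigcup_{x\in I}K(x)$ is totally bounded, hence has compact closure, so the limit superior is defined everywhere and your set-theoretic computation of it is legitimate. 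With that observation added, the proof is complete.
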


The proof of the lemma is left to the reader. We can now  characterize the functions corresponding to compact subsets of $I \times Y$   with its product topology  in terms of the map sending points of $I$ to the corresponding fiber as a subset of $Y$.   

\begin{prop} A subset $K\subset I\times Y$ with $\pi(K)=I$ is compact if and only if  it defines an upper semi-continuous function $K: I\to \HH$.
\end{prop}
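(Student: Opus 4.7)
The plan is to prove both implications by working with sequences, leveraging that both $I$ and $Y$ are metric spaces and using the reformulation of upper semi-continuity given in the preceding lemma.

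For the ``only if'' direction, I would start by noting that whenever $K \subset I\times Y$ is compact and $\pi(K)=I$, each fiber $K(x)=\pi^{-1}(\{x\})\cap K$ is a closed subset of the compact set $K$, hence compact, and is nonempty since $\pi(K)=I$; so the map $K: I\to\HH$ is well defined. To verify upper semi-continuity at an arbitrary $x_0\in I$, I would argue by contradiction: if it fails, there exists $\epsilon_0>0$ and sequences $x_n\to x_0$ and $y_n\in K(x_n)$ with $d(y_n,K(x_0))\ge\epsilon_0$. The points $(x_n,y_n)$ lie in the compact set $K$, so extract a subsequence $(x_{n_k},y_{n_k})\to(x_0,y_\infty)\in K$, and then $y_\infty\in K(x_0)$, contradicting $d(y_{n_k},K(x_0))\ge\epsilon_0$.

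For the ``if'' direction, I would assume $K: I\to\HH$ is upper semi-continuous and show that the total space $\widehat K=\{(x,y)\mid y\in K(x)\}$ is sequentially compact in the metric space $I\times Y$. Given any sequence $(x_n,y_n)$ in $\widehat K$, compactness of $I$ lets me pass to a subsequence with $x_{n_k}\to x\in I$. By the sequential form of upper semi-continuity (the lemma immediately preceding the proposition), for every $m$ we have $y_{n_k}\in K(x)_{1/m}$ for all sufficiently large $k$, so one can choose $z_k\in K(x)$ with $d(y_{n_k},z_k)\to 0$. Since $K(x)\in\HH$ is compact, extract a further subsequence along which $z_{k_j}\to z\in K(x)$; then $y_{n_{k_j}}\to z$ as well, and hence $(x_{n_{k_j}},y_{n_{k_j}})\to(x,z)\in\widehat K$.

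The main (minor) obstacle is keeping the bookkeeping of the two nested diagonal extractions straight in the ``if'' direction, and being clear that we genuinely need compactness of the fibers (which is automatic from $K(x)\in\HH$) to produce the limit $z$ inside $K(x)$. Everything else is a direct translation between the topological statement about $\widehat K\subset I\times Y$ and the fiberwise statement about $K: I\to\HH$, using only that $I\times Y$ is metrizable so that compactness and sequential compactness coincide.
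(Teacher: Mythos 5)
Your proof is correct. The ``only if'' direction is essentially the paper's argument verbatim: contradiction, a sequence $(x_n,y_n)\in K$ with $d(y_n,K(x_0))\ge\epsilon_0$, and a convergent subsequence whose limit would have to lie in $K$ but cannot lie over $K(x_0)$. The ``if'' direction is where you diverge. The paper first manufactures a single compact set $L$ containing all fibers (a finite subcover of $I$ by balls on which fibers stay in some $K(x_n)_\epsilon$, then $L=\bigcup_n K(x_n)_\epsilon$), and separately checks that $K$ is closed, concluding that $K$ is a closed subset of the compact set $I\times L$. You instead prove sequential compactness of $K$ directly: converge the base points, use the $\epsilon$--$\delta$ form of upper semi-continuity to push the $y_{n_k}$ onto nearby points $z_k$ of the limit fiber $K(x)$, and use compactness of $K(x)\in\HH$ to extract the final limit. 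Both routes are legitimate, but yours is slightly more robust: the paper's set $K(x_n)_\epsilon$ (a closed $\epsilon$-neighborhood of a compact set) need not be compact when $Y$ is an arbitrary complete metric space that is not proper, so the paper's construction of $L$ really uses local compactness of $Y$ (harmless in the intended application $Y=\R^d$), whereas your argument only ever invokes compactness of the individual fibers, which is part of the hypothesis that $K$ maps into $\HH$. The price you pay is the two nested subsequence extractions and the (correct) appeal to metrizability of $I\times Y$ to identify sequential compactness with compactness.
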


\begin{proof}   Suppose that $K$ is compact, and that the corresponding function is not upper semi-continuous. Then we can find a point $x\in I$,  $\epsilon>0$, and  a sequence $(x_i)_{i\in \N}$ converging to $x$ such that, for all $i\in \N$, $K(x_i)\notin K(x)_{\epsilon}$. For each $x_i$, we can hence find $y_i$ such that $d_Y(y_i,K(x)\,)>\epsilon$. Since $(x_i,y_i)\in K$ for all $n$, by compactness we can extract a subsequence which converges to a limit of the form $(x,y)$. We must have $d(y,K(x))\ge \epsilon$, hence $(x,y)$ cannot be in $K$; but this is impossible, since $K$ is closed. 

Conversely,  certainly if the map to fibers defined by $K$ does not take all of its values in $\HH$, then $K$ is not compact.  Now suppose that it does define a function $K: I\to \HH$, and that this function is upper semi-continuous. Fix some number $\epsilon>0$; for any $x$, we can find an open ball $B(x,\delta)\subset I$ such that, for all $x'\in B(x,\delta)$, $K(x')\subset K(x)_{\epsilon}$. By  compactness, we can cover $I$ with a finite number of balls $B(x_n,\delta_n)$, with $1 \le n \le N$.   Now  define the compact set $L=\cup_{n=1}^{N}   K(x_n)_{\epsilon}$, and note that for any $x\in I$, we have $K(x)\subset L$. That is, all of the fibers of $K$ are  contained in a single compact set. 

Furthermore, suppose that $(x_i,y_i)$ is a sequence of points in $K$ which converges to some $(x,y) \in I\times Y$. By semi-continuity, $d(y_i,K(x))$ tends to zero, hence $d(y,K(x)\,)=0$.   Since $K(x)$ is compact, this implies that $y\in K(x)$, hence $(x,y)\in K$.  Thus, $K$ is closed. Since $K$ is closed and contained in the compact set $I\times L$,  it is compact.  
\end{proof}

\begin{defin} We denote by $\SCS$ the space of upper semi-continuous maps from $I$ to $\HH$, endowed with the uniform distance  $d_{\SCS}(K,K'):=\sup\{d_H\left(K(x), K'(x)\,\right)|\, x\in I\}$. This is well defined, because  every  $K(x)$ is contained in  the compact subset of $Y$ which is  the projection of $K$ on the second coordinate, hence    the map $x \mapsto d_H(K(x), K'(x))$ is uniformly bounded.
\end{defin}

\begin{example} The space $\SCS$ is also the space of compact subsets of $I\times Y$ which project onto  $I$, endowed with a particular distance.    To aid the reader's intuition,  in the case of $I = [0,1]$,  and $Y = \mathbb R$, consider the sets 
\[
\begin{aligned} 
A &= \left([0, 1/2] \times [0,1]\right) \cup \left([1/2, 1]\times [0, 1/2] \right),\\
B &= \left([0, 1/2] \times [0,1+\epsilon]\right) \cup \left([1/2, 1]\times [0, 1/2] \right),\\
 C &= \left([0, 1/2 + \epsilon] \times [0,1]\right) \cup \left([1/2+\epsilon, 1]\times [0, 1/2] \right),
\end{aligned}
\]
giving $d_H(A,B) = d_H(A,C) = d_{\SCS}(A,B) = \epsilon$, but $d_{\SCS}(A,C) = 1/2$.
\end{example}

We will need the following property.

\begin{prop} The space $\SCS$   endowed with the uniform  distance is complete.
\end{prop}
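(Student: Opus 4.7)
The plan is to run the standard uniform-limit completeness argument, with the one nontrivial point being that the candidate limit must itself be shown to be upper semi-continuous, so as to lie back in $\SCS$.

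First I would take a Cauchy sequence $(K_n)_{n\in\N}$ in $\SCS$ and construct a pointwise candidate limit. For each fixed $x\in I$, the inequality $d_H(K_n(x),K_m(x))\le d_{\SCS}(K_n,K_m)$ shows that $(K_n(x))_n$ is Cauchy in the complete metric space $(\HH,d_H)$, so one may define $K(x):=\lim_n K_n(x)\in \HH$. To promote pointwise to uniform convergence, fix $\epsilon>0$, choose $N$ with $d_{\SCS}(K_n,K_m)\le\epsilon$ for $m,n\ge N$, and for $n\ge N$ and each $x$ let $m\to\infty$ in $d_H(K_n(x),K_m(x))\le\epsilon$; continuity of $d_H$ in its arguments on $\HH$ gives $d_H(K_n(x),K(x))\le\epsilon$, and taking the supremum over $x$ yields $d_{\SCS}(K_n,K)\le\epsilon$.

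The heart of the proof is to verify that $K$ is upper semi-continuous, which I would check via the reformulation given just before the compactness proposition: it suffices, for each $x_0\in I$ and each $\epsilon>0$, to produce $\delta>0$ with $d(x,x_0)<\delta\Rightarrow K(x)\subset K(x_0)_\epsilon$. Fix such $x_0,\epsilon$. Using the uniform convergence established above, pick $n$ with $d_{\SCS}(K_n,K)\le\epsilon/3$, so that $K(x)\subset K_n(x)_{\epsilon/3}$ for every $x\in I$ and $K_n(x_0)\subset K(x_0)_{\epsilon/3}$. Since $K_n$ is upper semi-continuous, there exists $\delta>0$ with $d(x,x_0)<\delta$ implying $K_n(x)\subset K_n(x_0)_{\epsilon/3}$. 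Two applications of Lemma~\ref{lem:trans} then chain these three inclusions: first $K(x)\subset K_n(x_0)_{2\epsilon/3}$, then $K(x)\subset K(x_0)_\epsilon$, which is exactly the required semi-continuity at $x_0$.

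The main obstacle is precisely this last step: upper semi-continuity of set-valued maps is a one-sided property and is not automatically stable under arbitrary uniform limits of maps into a general ordered space. What makes the argument go is the triangle-type behaviour of the $(\cdot)_\epsilon$ thickening codified in Lemma~\ref{lem:trans}, which lets a standard three-$\epsilon$ argument be carried out inside asymmetric inclusions rather than through symmetric Hausdorff distances. With that step secured, the rest of the proof is routine and completeness of $(\SCS,d_{\SCS})$ follows.
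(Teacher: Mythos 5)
Your proof is correct and follows essentially the same route as the paper's: pointwise limits via completeness of $\HH$, promotion to uniform convergence, and then the three-$\epsilon$ chain of inclusions $K(x')\subset K_n(x')_{\epsilon/3}\subset K_n(x)_{2\epsilon/3}\subset K(x)_{\epsilon}$ glued by Lemma~\ref{lem:trans} to get upper semi-continuity of the limit. No gaps.
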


\begin{proof} Suppose that $K_n$ is a Cauchy sequence in $\SCS$. Then for any $x\in I$, the sequence $K_n(x)$ is a Cauchy sequence in $\HH$; by completeness of $\HH$, it converges to a compact nonempty set $K(x)$. It remains to prove that the function associated to $K$ is upper semi-continuous. 

Let $\epsilon>0$; by definition, we can find an integer $N$ such that, for all $m,n>N$ and all $x\in I$,  $d_H(K_m(x),K_n(x))<\epsilon/3$. This implies in the limit that, for all $n>N$, $d_H(K_n(x), K(x))   \le   \epsilon/3$. Now, choose such an $n$, and $x\in I$. We can find $\delta$ such that, if $d_I(x',x)<\delta$, we have $K_n(x')\subset K_n(x)_{\epsilon/3}$. But we have $K(x')\subset K_n(x')_{\epsilon/3}$ and $K_n(x)\subset K(x)_{\epsilon/3}$.  By Lemma \ref{lem:trans}, we have $K(x')\subset K(x)_{\epsilon}$ as soon as $d_I(x',x)<\delta$, hence the limit sequence is in $\SCS$.
\end{proof}

\section{A fixed point theorem}

\subsection{The setting}

We consider a finite or countable covering $\{I_{\alpha}\}_{\alpha\in\A} $ of $I$ by compact sets, and a family $T$ of continuous maps $T_{\alpha} : I_{\alpha}\to I$ such that $\{T_{\alpha}\left(I_{\alpha}\right)\}_{\alpha\in\A} $ is a covering of $I$. We also consider a family of continuous maps $S_{\alpha}: I_{\alpha}\times Y \to Y$.   For any $x\in I_{\alpha}$, we denote by $S_{\alpha,x}$ the map $S_{\alpha,x} : Y\to Y$, $y\mapsto S_{\alpha}(x,y)$.

We suppose that the maps $S_{\alpha,x}$ are uniformly contracting; that is,  there exists a positive constant $c<1$ such that, for all $\alpha\in\A$, for all $x\in I_{\alpha}$, and for all $y,z\in Y$, $d_Y(S_{\alpha,x}(y), S_{\alpha,x}(z))\le c\;  d_Y(y,z)$. 
We now define a family $\widetilde T$ of (skew-product) maps $\widetilde T_{\alpha} : I_{\alpha}\times Y\to I\times Y$ by $\widetilde T_{\alpha}(x,y)=\left(T_{\alpha}(x), S_{\alpha}(x,y)\right)$.

\begin{notat} For $\alpha \in \A$ and $K\in \SCS$  we denote by $K_\alpha=\{(x,y)\in K \,|\, x\in I_\alpha\}$ the subset of $K$ which projects to $I_\alpha$.   (Since $\alpha$ belongs to $\A$ and not $\mathbb R$, this should cause no confusion with $K_{\epsilon}$.)  
\end{notat}

\begin{assumption} If the covering is infinite, we add the condition that, for any $K\in \SCS$, $\overline{\cup_{\alpha\in \A} \; \widetilde T_{\alpha}(K_{\alpha})}$ is compact.  This is equivalent to assuming the {\em boundedness condition}: that, for some (or equivalently, any) $y\in Y$,  the set $\{S_{\alpha}(x,y)\,|\,\alpha\in \A, x\in I_{\alpha}\}$ is totally bounded; this condition is automatically satisfied if $\A$ is finite. 
\end{assumption}

\begin{defin} The set map associated with the skew-product $\widetilde T$ is the map $\Theta : \SCS\to \SCS; \, \Theta(K)=\overline{\cup \; \widetilde T_\alpha(K_\alpha)}$
\end{defin}

\begin{rem} The set $\Theta(K)$ is in $\SCS$, because it is compact by hypothesis and it projects onto $I$ since the $T_{\alpha}\left(I_{\alpha}\right)$ cover $I$. 
\end{rem}

\subsection{The theorem}

Our first result is the following.

\begin{thm} The map $\Theta$ has a unique fixed point.
\end{thm}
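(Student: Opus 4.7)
The plan is to apply the Banach contraction mapping principle on the complete metric space $(\SCS,d_{\SCS})$ already constructed in the preceding proposition. The boundedness assumption together with the covering condition $\bigcup_\alpha T_\alpha(I_\alpha)=I$ ensures, as noted in the remark, that $\Theta$ sends $\SCS$ to $\SCS$, so the only real work is to verify the contraction estimate $d_{\SCS}(\Theta(K),\Theta(K'))\le c\,d_{\SCS}(K,K')$, with $c<1$ the uniform contraction constant of the family $\{S_{\alpha,x}\}$.

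The key observation is that each $\widetilde T_\alpha$ is a skew product that preserves first coordinates: over a prescribed image $T_\alpha(x)$, the second coordinate is controlled solely by the map $y\mapsto S_\alpha(x,y)$. Set $\epsilon:=d_{\SCS}(K,K')$ and fix $(x,y)\in K_\alpha$. Since $d_H(K(x),K'(x))\le\epsilon$ and $K'(x)$ is compact, there is $z\in K'(x)$ with $d_Y(y,z)\le\epsilon$, and then $(x,z)\in K'_\alpha$ with
\[ d_Y\bigl(S_\alpha(x,y),\,S_\alpha(x,z)\bigr)\le c\epsilon, \]
while $\widetilde T_\alpha(x,y)$ and $\widetilde T_\alpha(x,z)$ share the first coordinate $T_\alpha(x)$. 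To propagate this fiberwise estimate through the union and the closure, fix $x'\in I$ and $y'\in\Theta(K)(x')$, and write $(x',y')$ as the limit of a sequence $(x'_n,y'_n)\in\widetilde T_{\alpha_n}(K_{\alpha_n})$. Applying the estimate with the same index $\alpha_n$ produces companion points $(x'_n,y''_n)\in\widetilde T_{\alpha_n}(K'_{\alpha_n})\subset\Theta(K')$ with $d_Y(y'_n,y''_n)\le c\epsilon$. The boundedness assumption confines the $y''_n$ to a totally bounded subset of $Y$, so a subsequence converges to some $y''\in Y$; compactness of $\Theta(K')$ places $(x',y'')$ in it, and continuity of $d_Y$ yields $d_Y(y',y'')\le c\epsilon$. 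Exchanging the roles of $K$ and $K'$ gives $d_H(\Theta(K)(x'),\Theta(K')(x'))\le c\epsilon$ uniformly in $x'$, hence the desired contraction estimate and the unique fixed point by Banach.

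The main delicacy lies in the closure step: one must reuse the same index $\alpha_n$ when selecting the companion sequence so that the limit still lies in $\Theta(K')$, and one must invoke the boundedness hypothesis to secure the relevant subsequential limit in $Y$. Once those indices are aligned, the fiber-preserving character of the skew product reduces the contraction property of $\Theta$ to the one-point uniform contraction of the maps $S_{\alpha,x}$ built into hypothesis (iii), and the rest is formal.
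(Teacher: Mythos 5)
Your proof is correct and follows essentially the same route as the paper: Banach's fixed point theorem on the complete space $(\SCS,d_{\SCS})$, with the contraction constant coming from the uniform fiberwise contraction of the maps $S_{\alpha,x}$. The only difference is that where the paper disposes of the union-and-closure step by citing Lemma~\ref{lem:union}, you argue directly by extracting a convergent subsequence of companion points inside the compact set $\Theta(K')$ --- a slightly more careful treatment, since it avoids implicitly identifying the fiber of a closure with the closure of the fibers.
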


\begin{proof} Let $K$ be an element of $\SCS$, and $K'$ be its image by $\Theta$. By definition, we have 
\[K'(x')=\overline{\cup_{\alpha,x; T_{\alpha}(x)=x'}\; {S_{\alpha,x}(\,K(x)\,)}}\]

\smallskip

Suppose that $K_1, K_2$ are two elements of $\SCS$, with $d_{\SCS}(K_1, K_2)=\delta$. Let $K'_1=\Theta(K_1)$, $K'_2=\Theta(K_2)$, and let $x'\in I$.   Let $\alpha, x$ be such that $T_{\alpha}(x)=x'$;  we have $d_H(K_1(x), K_2(x))\le\delta$. Since $S_{\alpha,x}$ is uniformly contracting by 
at least $c$, we have $d_H\left(S_{\alpha,x}(\,K_1(x)\,), S_{\alpha,x}(K_2(x))\right)\le c\delta$. Using Lemma \ref{lem:union}, we deduce that $d_H(K'_1(x'), K'_2(x'))\le c \delta$.  Since this is true for all $x'$, we obtain $d_{\SCS}(K'_1,K'_2)\le c\delta$.

We have proved that $\Theta$ is a strict contraction by at least $c$ on $\SCS$; since this space is complete, by the Banach fixed point theorem, $\Theta$ has a unique fixed point. 
\end{proof}

\section{Heuristic models for the Gauss measure and natural extensions of continued fraction maps}

We will now apply the result of the previous section, with $Y=\R^d$ and $I$ a compact subset of $\R^d$.
We consider a piecewise uniformly expansive surjective map $T : I\to I$, on a compact nonempty subset $I\subset \R^d$ which is the closure of its interior.  

\begin{notat}
More precisely, we consider  a finite or countable covering $\{I_{\alpha}\}_{\alpha\in\A} $ of $I$ by compact sets  each the closure of its interior, such that the intersection $I_{\alpha}\cap I_{\beta}$ of two distinct elements of the covering has Lebesgue measure zero. 
We then suppose that there is a constant $C>1$ and  a countable collection of diffeomorphisms $T_{\alpha} : I_{\alpha}\to T_{\alpha}(I_{\alpha})\subset I$ with Jacobian matrix $T'_{\alpha}(x)$ uniformly expanding by at least $C$: for any $x\in I_{\alpha}$, and for any tangent vector $v$, $\|T_{\alpha}'(x).v\|\ge C \|v\|$, where we denote by $\|v\|$ the euclidian norm. We also suppose that $\bigcup_{{\alpha}\in \A} T_{\alpha}(I_{\alpha})=I$. 
\end{notat}

\begin{rem}The map $T$ is defined by $T(x)=T_{\alpha}(x)$ if $x\in I_{\alpha}$. Hence the map might be multiply defined on the intersection of two elements of the covering; however, this intersection has measure zero, and is irrelevant from the measurable viewpoint. 
\end{rem}


Our goal is to determine an absolutely continuous invariant measure (a Gauss measure) $\mu$ for $T$, 
and to build a geometric model for the natural extension of the dynamical system $(I,\mathcal B, \mu,T)$, where $\mathcal B$ is the Borel $\sigma$-algebra.  (Similarly we will use $\widetilde{\mathcal B}$ for the Borel $\sigma$-algebra in the geometric model).

\begin{defin}
The {\em standard form with parameter $f$} for the natural extension of $T$, where $f$ is a family of piecewise continuous and uniformly bounded maps $f_{\alpha}: I_{\alpha}\to \R^d$,  is the map 
\[ 
\begin{aligned} 
\widetilde T: I\times \R^d&\to I\times \R^d\\
                                (x,y)&\mapsto \left(T_{\alpha}(x), {{T'_{\alpha}}^{*}}^{-1}(x).y +f_{\alpha}(x)\right),
\end{aligned} 
\]
where ${{T'_{\alpha}}^{*}}^{-1}$ denotes the inverse of the transpose of $T'_{\alpha}$ (and as usual we identify $\mathbb R^d$ with its dual real vector space). 
\end{defin}

\begin{prop} There exists a unique compact set $K$ which projects onto $I$ and is invariant under $\widetilde T$.
\end{prop}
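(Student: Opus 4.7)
The plan is to obtain $K$ by a direct appeal to the fixed point theorem of the previous section, once the hypotheses (i)--(iii) have been checked in this Euclidean setting. Conditions (i) and (ii) are part of the notation already set up for a piecewise uniformly expanding $T$, so the real work is to verify (iii): that the family $S_\alpha(x,y) = {{T'_\alpha}^{*}}^{-1}(x)\cdot y + f_\alpha(x)$ is both uniformly contracting and totally bounded.

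For the contraction, I would use the uniform expansion hypothesis $\|T'_\alpha(x)\cdot v\| \ge C\|v\|$. This forces $T'_\alpha(x)$ to be invertible with $\|{T'_\alpha(x)}^{-1}\|_{\mathrm{op}} \le 1/C$, and since the operator norm is preserved under transposition for the Euclidean inner product, ${{T'_\alpha}^{*}}^{-1}(x) = \bigl({T'_\alpha(x)}^{-1}\bigr)^{*}$ also has operator norm at most $1/C$. Thus, for any $\alpha$, any $x \in I_\alpha$, and any $y,z \in \R^d$,
\[
\|S_{\alpha,x}(y) - S_{\alpha,x}(z)\| = \|{{T'_\alpha}^{*}}^{-1}(x)\cdot(y-z)\| \le \tfrac{1}{C}\|y-z\|,
\]
and $c := 1/C < 1$ serves as the uniform contraction constant.

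For the total boundedness, I would evaluate at $y=0$: the set $\{S_\alpha(x,0) \mid \alpha \in \A,\, x \in I_\alpha\}$ equals $\bigcup_\alpha f_\alpha(I_\alpha)$, which is uniformly bounded in $\R^d$ by hypothesis (iii$'$). A bounded subset of $\R^d$ is totally bounded, so the boundedness condition in the assumption holds, and $\overline{\cup_{\alpha\in\A} \widetilde T_\alpha(K_\alpha)}$ is compact for every $K \in \SCS$.

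With all hypotheses verified, the fixed point theorem yields a unique $K \in \SCS$ with $\Theta(K) = K$, i.e.\ $K = \overline{\cup_{\alpha \in \A}\, \widetilde T_\alpha(K_\alpha)}$. By the earlier correspondence between $\SCS$ and compact subsets of $I \times \R^d$ that project onto $I$, this $K$ is precisely the desired compact set projecting onto $I$ and invariant under $\widetilde T$, and uniqueness in $\SCS$ translates to uniqueness among such compact sets. The only nontrivial point in the whole argument is the verification that the transpose-inverse construction in the definition of $S_\alpha$ genuinely inherits the $1/C$ contraction from the expansion of $T$; everything else reduces to an immediate application of the previous theorem.
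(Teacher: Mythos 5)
Your proposal is correct and follows exactly the paper's route: a direct application of the fixed point theorem of the previous section, checking that the maps $S_{\alpha,x}$ are uniform contractions of ratio $1/C$ and satisfy the boundedness condition. You supply slightly more detail than the paper (the operator-norm argument for the transposed inverse Jacobian and the total boundedness of $\bigcup_\alpha f_\alpha(I_\alpha)$ in $\R^d$), but the argument is the same.
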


\begin{proof} We can apply the theorem of the previous section, with $S_{\alpha,x}(y)= {{T'_{\alpha}}^{*}}^{-1}(x).y +f_{\alpha}(x)$; the maps $S_{\alpha,x}$ are by definition contractions of ratio at most $\frac 1C$, and they satisfy the boundedness condition, hence there is a unique $K\in \SCS$ such that $\Theta(K)=K$.
\end{proof}

Note that, for any given $\alpha$, the map $\widetilde T_{\alpha}$ clearly has Jacobian determinant 1, since its Jacobian matrix is a $2d\times 2d $ matrix of the form $\begin{pmatrix}T'_{\alpha}(x)&*\\0&{{T'_{\alpha}}^{*}}^{-1}(x)\end{pmatrix}$. Hence the Lebesgue measure of $K_{\alpha}$ is equal to the measure of its image by $\widetilde T_{\alpha}$.

\begin{rem}  See pp. 648--650 of \cite{AN} for a discussion of an antecedent of this approach.
\end{rem}

\begin{defin}\label{d:Measure}
Consider the measurable function $\phi: I \to \mathbb R$  given by $\phi(x) = m(\, K(x)\,)$.    We define the measure $\mu$ on $I$  by  
 \[ \mu(S) := \int_S\, \phi(x)\, dx\,,\]
 for $S$ any Borel set.
\end{defin}

\begin{thm} \label{th:natext}Let $K$ be the unique fixed point of $\Theta$. If the Lebesgue measure $m(K)$ of $K$ is not zero and $K \setminus \widetilde T(K)$ has measure zero (which always holds when $\A$ is finite), then $(K,\widetilde{\mathcal B}, m, \widetilde T)$ is a model for the natural extension of $(I, \mathcal B, \mu, T)$. 
\end{thm}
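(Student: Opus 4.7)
I would verify that $(K, \widetilde{\mathcal B}, m, \widetilde T)$ meets the three defining properties of a natural extension of $(I, \mathcal B, \mu, T)$: (a) $\widetilde T$ is an $m$-preserving transformation of $K$ that is invertible modulo null sets, (b) $\pi: K \to I$ is a measurable factor map with $\pi_* m = \mu$, and (c) the minimality condition $\bigvee_{n \geq 0} \widetilde T^{n}(\pi^{-1}(\mathcal B)) = \widetilde{\mathcal B}$ modulo null sets.

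\textbf{Factor map and pushforward.} The computation $\pi(\widetilde T_\alpha(x,y)) = T_\alpha(x) = T(\pi(x,y))$ on $K_\alpha$ gives $\pi \circ \widetilde T = T \circ \pi$. By Fubini and Definition~\ref{d:Measure}, $\pi_* m(S) = m(\pi^{-1} S \cap K) = \int_S m(K(x))\,dx = \mu(S)$, identifying $\mu$ as the marginal.

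\textbf{Measure preservation and a.e.\ invertibility.} As noted in the excerpt, each $\widetilde T_\alpha$ has Jacobian determinant one, so preserves Lebesgue measure locally. Assumption (i$'$) implies the sets $K_\alpha = K \cap (I_\alpha \times \mathbb R^d)$ are essentially disjoint, giving $\sum_\alpha m(K_\alpha) = m(K)$. I would then combine the three identities $\sum_\alpha m(\widetilde T_\alpha(K_\alpha)) = \sum_\alpha m(K_\alpha) = m(K)$, $m(\bigcup_\alpha \widetilde T_\alpha(K_\alpha)) = m(\widetilde T(K))$ (boundaries of smooth images of compact sets in $\mathbb R^{2d}$ being Lebesgue null), and $m(\widetilde T(K)) = m(K)$ by hypothesis, to deduce the images $\widetilde T_\alpha(K_\alpha)$ themselves are essentially disjoint. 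This makes $\widetilde T$ a.e.\ injective, and combined with $m(K \setminus \widetilde T(K)) = 0$, a.e.\ surjective. For measure preservation, for Borel $B \subseteq K$,
$$
m(\widetilde T^{-1} B) = \sum_\alpha m\bigl(\widetilde T_\alpha^{-1}(B \cap \widetilde T_\alpha(K_\alpha))\bigr) = \sum_\alpha m(B \cap \widetilde T_\alpha(K_\alpha)) = m(B),
$$
using essential disjointness on both sides. $T$-invariance of $\mu$ then follows from $\mu(T^{-1}S) = m(\pi^{-1}T^{-1}S) = m(\widetilde T^{-1}\pi^{-1}S) = m(\pi^{-1}S) = \mu(S)$.

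\textbf{Minimality---the main obstacle.} The crux is to show that for a.e. $(x,y) \in K$, the projection sequence $\bigl(\pi(\widetilde T^{-n}(x,y))\bigr)_{n \geq 0}$ determines $(x,y)$. Using the essential disjointness of the $\widetilde T_\alpha(K_\alpha)$ established above, $(x,y)$ has an a.e.\ unique backward orbit $(x^{(-n)}, y^{(-n)})$ whose branch indices $\alpha_n$ (with $x^{(-n)} \in I_{\alpha_n}$) are measurably determined by the $x^{(-n)}$. Then for any fixed $y_0 \in Y$,
$$
y = \lim_{n \to \infty} S_{\alpha_1, x^{(-1)}} \circ S_{\alpha_2, x^{(-2)}} \circ \cdots \circ S_{\alpha_n, x^{(-n)}}(y_0),
$$
the limit existing because each $S_{\alpha,x}$ is a $c$-contraction with $c<1$ and the family is totally bounded (so differences telescope by a geometric series). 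Hence $y$ is a measurable function of the past projection sequence alone, so $\bigvee_{n \geq 0} \widetilde T^{n}(\pi^{-1}(\mathcal B))$ separates points of $K$ modulo null sets; since $K$ is a Polish space a Blackwell-type argument then yields equality with $\widetilde{\mathcal B}$ modulo null sets. The principal difficulty I anticipate is this minimality step: measurably selecting the branch indices $\alpha_n$ from the backward projections (where essential disjointness, not pointwise disjointness, must be handled carefully) and rigorously justifying measurability of the contracting limit as a function of the full past.
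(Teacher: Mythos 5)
Your proposal is correct and follows essentially the same route as the paper: the same measure-counting argument (each branch has Jacobian determinant one, so $\sum_\alpha m(\widetilde T_\alpha(K_\alpha))=m(K)=m(\widetilde T(K))$ forces the images to be essentially disjoint and $\widetilde T$ to be a.e.\ invertible and measure-preserving), and the same use of the uniform fiber contraction to show the past projection sequence determines the second coordinate. The only cosmetic difference is that the paper runs the contraction argument as a contradiction (two points over the same bi-infinite itinerary would have backward fiber distances growing like $C^{n}$, impossible in a compact set), whereas you reconstruct $y$ constructively as a limit of composed contractions --- the same mechanism.
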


\begin{proof} By construction, $\widetilde T_\alpha : K_\alpha\to \widetilde T_\alpha(K_\alpha)$ is a bijection which preserves Lebesgue measure, hence $m(K_\alpha)=m(\widetilde T_\alpha(K_\alpha))$. We have $\bigcup K_\alpha = K$ and up to measure zero 
$K= \widetilde T(K)=   \bigcup \widetilde T_\alpha(K_\alpha) $; 
hence we have 

\[\sum_{\alpha \in \A} m(\widetilde T_\alpha (K_\alpha)) =\sum_{\alpha\in \A} m(K_\alpha)=  m(K) = m(\widetilde T(K)) = m(\, \bigcup \widetilde T_\alpha(K_\alpha) \,)
\,,\]
and  the sets $\widetilde T_\alpha(K_\alpha)$ must be disjoint in measure. This implies that $\widetilde T$ is a bijection (up to a set of measure zero) which preserves Lebesgue measure.

Since $\pi(K)=I$, the system $(I,\mathcal B, \mu, T)$ is a factor of $(K,m, \widetilde{\mathcal B}, \widetilde T)$. It remains to prove that $(K,\widetilde{\mathcal B}, m, \widetilde T)$ is isomorphic to the natural extension, and not a strict extension of it. 

An element of the natural extension defines a sequence $(x_n)_{n\in \Z}$ such that $T(x_n)=x_{n+1}$ (this sequence is the projection of the well-defined bi-infinite orbit of the element in the natural extension). Suppose such  a sequence  is the image of two distinct elements of $K$. This means that we can find two 
elements $(x_0,y_0)$ and $(x_0, y'_0)$ in $K$, and two sequences $(y_n)_{n\in \Z}$ and  $(y'_n)_{n\in \Z}$ such that, for all $n\in \Z$, $\widetilde T^n(x_0,y_0)=(x_n, y_n)$ and $\widetilde T^n(x_0,y'_0)=(x_n, y'_n)$. But since the first coordinate is the same for both sequences, at each step the same branch of $T$ is used,  and hence for any $n\in \Z$, we have $d(y_{n}, y'_n)\ge C d(y_{n+1}, y'_{n+1})$. 

If $y_0$ is different from $y'_0$, this implies that   $\lim_{n\to -\infty}d(y_{n}, y'_n)=\infty$, which is impossible since $K$ is compact. Hence the projection to the natural extension is one-to-one. 
\end{proof}

\begin{cor}
If $K \setminus \widetilde T(K)$ has measure zero and $K$ has nonempty interior, then $(K,\widetilde{\mathcal B}, m, \widetilde T)$ is a model for the natural extension of $T$. 
\end{cor}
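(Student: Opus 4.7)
The corollary merely weakens the hypothesis ``$m(K)\neq 0$'' of Theorem~\ref{th:natext} to the geometric condition that $K$ has nonempty interior. My plan is therefore to show that, under the standing assumptions, nonempty interior implies $m(K)>0$, and then to invoke Theorem~\ref{th:natext} verbatim.

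To derive $m(K)>0$ from nonempty interior: recall that $K\subset I\times \R^d \subset \R^{2d}$, and that $m$ denotes the $2d$-dimensional Lebesgue measure. Let $U\subset K$ be a nonempty open subset of $I\times\R^d$. By the standing hypothesis that $I$ equals the closure of its interior in $\R^d$, the set $\mathrm{int}(I)$ is dense in $I$, so $\mathrm{int}(I)\times \R^d$ is a dense open subset of $I\times\R^d$. Therefore $U\cap(\mathrm{int}(I)\times\R^d)$ is a nonempty subset of $K$ that is open in the ambient $\R^{2d}$; in particular it has strictly positive $2d$-dimensional Lebesgue measure, so $m(K)>0$.

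With this in hand, the remaining hypothesis $m(K\setminus\widetilde T(K))=0$ transfers directly from the assumption of the corollary, and Theorem~\ref{th:natext} then gives that $(K,\widetilde{\mathcal B},m,\widetilde T)$ is a model for the natural extension of $(I,\mathcal B,\mu,T)$, where $\mu$ is the marginal measure of Definition~\ref{d:Measure}. There is essentially no obstacle here; the only point worth articulating carefully is to interpret ``interior'' inside the appropriate ambient space $I\times\R^d$, and to use the density of $\mathrm{int}(I)$ in $I$ to promote a relatively open subset of $K$ to a genuinely $\R^{2d}$-open subset that witnesses the positivity of $m(K)$.
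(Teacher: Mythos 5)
Your proposal is correct and follows essentially the same route as the paper, which simply observes that any open subset of $\R^{2d}$ has nonzero Lebesgue measure and then invokes Theorem~\ref{th:natext}. The only difference is that you spell out the (harmless) ambient-space point — using that $I$ is the closure of its interior to promote a relatively open subset of $I\times\R^d$ to a genuinely $\R^{2d}$-open set — which the paper leaves implicit.
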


\begin{proof}
This is immediate, since any open subset of $\R^{2d}$  has nonzero Lebesgue measure.
\end{proof}

\begin{rem}  The knowledge of $K$ gives us the density of an invariant measure.  Indeed, for $x \in I$, and $\phi$ as in Definition~\ref{d:Measure}, we have 
$\phi(x) = \sum_{x', T(x') = x} |\, \text{Jac}_{x'} T|^{-1}\, \phi(x')$.  
That is,  the function $\phi$    
satisfies  the classical Ruelle equation for an invariant density.  
\end{rem}

\begin{rem} There are (obvious) examples where $K$ has empty  interior: If the parameter $f$ of $\widetilde T$ is identically 0, then it is immediate that $K=I\times \{0\}$, which has zero Lebesgue measure. Hence we need a good choice of the parameter in order to obtain a domain $K$ with nonzero measure. The next section will give  examples of such a parameter in the case of   M\"obius transformations.
\end{rem}

\section{Piecewise homographies} 

\subsection{A natural choice of parameter for the standard form}    

We now restrict to a special case.   Suppose that the map $T$ is piecewise homographic and locally increasing on a compact interval $I\subset \R$. More precisely, suppose that we have a finite or countable partition (up to extremities) of $I$ in compact subintervals $I_{\alpha}$, and that $T$ is given on $I_{\alpha}$ by an element  $M_{\alpha}\in \text{PSL}(2,\R)$.

\begin{rem} To compute, we must represent $M_{\alpha}$ by a matrix $\begin{pmatrix} a&b\\c&d\end{pmatrix}\in \text{SL}(2,\R)$; this matrix is well-defined up to a sign, and we can suppose that $c\ge 0$; we will abuse notations by identifying $M_{\alpha}$ and this representative, which is uniquely defined if $c\ne 0$.
\end{rem}
 
For this matrix  $M_{\alpha}$, we have $T(x)=\frac{ax+b}{cx+d}$ and $T'(x)=\frac 1{(cx+d)^2}$. We suppose that $T$ is uniformly expansive, that is, there exists some constant $k<1$ such that $(cx+d)^2\le k$. To use the previous section, we must consider a map $\widetilde T(x,y)=\left (\frac{ax+b}{cx+d}, (cx+d)^2y+f(x)\right)$; the problem is to choose a good function $f$; as we have seen, the constant zero function will not work.

We will find an heuristic for  our function by supposing first that the elements  $M_{\alpha}$ generate a lattice $\Gamma \subset \text{PSL}(2,\R)$, that is, a discrete subgroup of $\text{PSL}(2,\R)$ such that $\Gamma\backslash \text{PSL}(2,\R)$ has finite volume. 

We recall here some results from \cite{AS}, Section 3. 
Let $A_{\gamma}\subset \text{PSL}(2,\R)$ be the set 
\[A_{\gamma}=\left\{\begin{pmatrix}\alpha&\beta\\\gamma&\delta\end{pmatrix}|\  \gamma> 0\right\}.\] 
  The set $A_{\gamma}$ has full Haar measure, as it is the complement of the  codimension 1 
 set  defined by $\gamma=0$.  On $A_{\gamma}$, we can take $\alpha, \gamma, \delta$ as coordinates, and the Haar measure   of $\text{PSL}(2,\R)$ is given on $A_{\gamma}$, up to a constant, by $\frac {d\alpha\, d\gamma\, d\delta}{|\gamma|}$. 

The diagonal (or {\em geodesic}) flow  is the action on  $\text{PSL}(2,\R)$  on the right by the one-parameter diagonal  group $g_t= \begin{pmatrix} e^t&0\\0&e^{-t}\end{pmatrix}$.  This preserves Haar measure, and commutes with the action of $\Gamma$ on the left, hence acts on the quotient $\Gamma\backslash \text{PSL}(2,\R)$. 

We will obtain $\widetilde T$ as a return map of the diagonal flow to a section. Any element in $A_{\gamma}$ can be written $$\begin{pmatrix}xe^t&(xy-1)e^{-t}\\e^t&ye^{-t}\end{pmatrix}$$

An easy computation shows that, in these coordinates, the Haar measure is given by $dx\, dy\, dt$. The matrices of the form $\begin{pmatrix} x& xy-1\\1&y \end{pmatrix}$ form a transversal of the diagonal flow on $A_{\gamma}\,$: any orbit of the diagonal flow on $A_{\gamma}$ meets this set in one point. We can now compute the first return map to the corresponding section for the diagonal flow on the quotient space. The product of this matrix by $M_{\alpha}$ is $\begin{pmatrix}ax+b& a(xy-1)+by\\cx+d&c(xy-1)+d\end{pmatrix}$; its orbit under the diagonal flow returns to the section at the point $\begin{pmatrix}\frac{ax+b}{cx+d}&*\\1&(cx+d)^2 y-c(cx+d)\end{pmatrix}$, and the transverse invariant measure is given by $dx\, dy$. 

We thus are lead to define the function $\widetilde T$ by $$\widetilde T(x,y):=\left(\frac{ax+b}{cx+d}, (cx+d)^2y-c(cx+d)\right).$$ This map is a local diffeomorphism with Jacobian 1.  If the function $c(c x +d)$ is uniformly bounded on $I$,  the results of  the previous section apply: There is a unique compact set $K$ such that $\overline{\widetilde T(K)}=K$.   If this compact set has nonzero measure and satisfies the condition  $K\setminus \widetilde T(K)$ has measure zero, then $\widetilde T: K\to K$ is a natural extension for $T$, which preserves Lebesgue measure on $K$. This allows one to compute an explicit formula for an invariant measure for  $T$  that is absolutely continuous with respect to Lebesgue measure.

We will  show in the next section that this heuristic works in a large number of cases.

\subsection{Some conjugate forms of the natural extension map}

The parametrization above has the nice feature that the invariant measure appears as Lebesgue measure, and the inconvenience that the geometric interpretation is unclear. 

We can find other sections of the diagonal flow by considering its geometric interpretation as the geodesic flow on the hyperbolic plane. Recall that one can identify $\text{PSL}(2,\R)$ with the unit tangent bundle of the hyperbolic plane; any matrix $\begin{pmatrix}\alpha&\beta\\\gamma&\delta\end{pmatrix}$ determines a unique geodesic, with origin $u=\frac{\beta}{\delta}$ and extremity $x=\frac{\alpha}{\gamma}$. The highest point on this geodesic corresponds to a matrix such that $\gamma=\delta$; the endpoints of the geodesic, plus the distance to the highest point, determine another coordinate system on $A_{\gamma}\,$: each element can be written in a unique way
$$\begin{pmatrix}
\frac{xe^s}{\sqrt{x-u}}  &   \frac{ue^{-s}}{\sqrt{x-u}}\\ 
\frac{e^s}{\sqrt{x-u}}    &    \frac{e^{-s}}{\sqrt{x-u}}
\end{pmatrix}.$$

The complex coordinate of the highest point of the corresponding geodesic is $\frac{x+u}2+i \frac{x-u}2$, and in these coordinates the Haar measure is given by $\frac{dx\, du\, ds}{(x-u)^2}$. Taking $s=0$, we obtain another section parametrized as 
$\begin{pmatrix}
\frac{x}{\sqrt{x-u}}  &   \frac{u}{\sqrt{x-u}}\\ 
\frac{1}{\sqrt{x-u}}    &    \frac{1}{\sqrt{x-u}}
\end{pmatrix}$ which can be identified with the hyperbolic plane; in these coordinates, the first return map to the section is given by $(x,u)\mapsto (\frac{ax+b}{cx+d},\frac{au+{b} }{cu+d})$, and the invariant measure turns out to be the natural hyperbolic measure, with density $\frac{dx\, du}{(x-u)^2}$. 

A variant quite common in number theory consists in replacing the coordinate $u$ by $v=-\frac 1u$. In that case, we consider the same section, but parametrized as 
 $$\begin{pmatrix}\frac x{\sqrt{x+\frac 1v}}&\frac {-1}{v\sqrt{x+\frac 1v}}   \\\frac 1{\sqrt{x+\frac 1v}}& \frac 1{\sqrt{x+\frac 1v}}\end{pmatrix}.$$ 

A small computation gives us another heuristic formula in this case: 

$$\widehat T(x,v):=\left(\frac{ax+b}{cx+d}, \frac {dv-c}{a-bv}\right).$$

One can easily check  that this formula can also be written $\widehat T(x,v):=\left(M.x, {M^{t}}^{-1}.v\right)$; $\widetilde T$ and  $\widehat T$ are conjugate by the map $(x,y)\mapsto (x, \frac{y}{1+xy})$, and the map $\widehat T$ leaves invariant the density $\frac {dx\, dv}{(1+xv)^2}$.

  Note that these two alternate forms of the natural extension map are locally products, as opposed to honest skew-product maps.  As well, $\widehat T$ sends rectangles to rectangles,  making it  particularly convenient.   As we will see below, the domain of $\widehat T$ has a local product structure, whereas that of $\widetilde T$ need not.  On the other hand,  the action of $\widehat T$ on its second coordinate is not linear, which raises difficulties in proving that it is   uniformly contracting on this coordinate. 

\section{Examples: classical and other}\label{s:classExamples}

In this section we give a number of examples; some of them already appeared in the literature,
in which case we only present the results and give references for the proofs.

\subsection{The  Gauss map and the Farey map}\label{ss:GaussFarey}

\begin{figure}
\begin{center}
\scalebox{0.4}{
\includegraphics{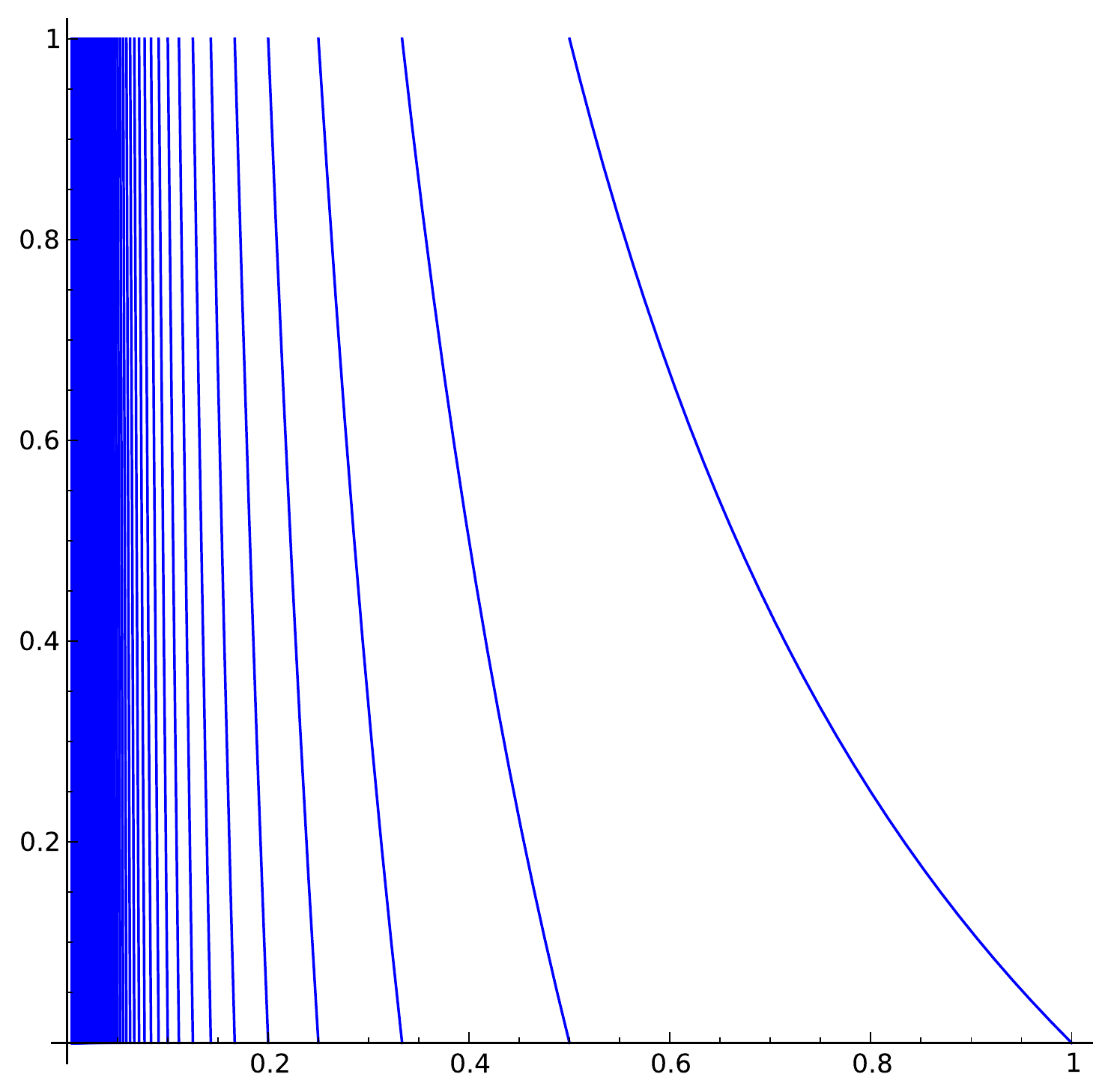}}
\caption{The graph of the Gauss map.}
\label{fig.Gaussmap}
\end{center}
\end{figure}

The classical Gauss map has been studied in many papers since the letter of Gauss, see for example \cite{Art, AF, Se, Ar}; it is related to the classical Euclid algorithm, replacing a pair $(a,b)$ with $a<b$ by $(b\mod a, a)$ and renormalizing at each step so that the largest number is 1. More precisely, if we denote the integer part of $x$ by  $[x]=\sup\{n\in \N |\, n\le x\}$, and the fractional part of $x$ by $\{x\}=x-[x]$, the Gauss map is defined as 

\[T: [0,1]\to [0,1]; \quad x\mapsto \left\{\frac 1x\right\}.\]
It is locally given by a homography $x\mapsto \frac 1x-n$, related to the matrix $\begin{pmatrix}-n&1\\1&0\end{pmatrix}$.  The map itself is not strictly contracting, since the derivative at $x=1$ is 1, but its square is strictly contracting; it satisfies the conditions of the theorem, hence there is a unique compact set $K$ invariant by the natural extension 

$$\widetilde T(x,y)= \left(\left\{\frac 1x\right\},x-x^2y\right).$$

This invariant set is easily found by a numerical experiment, plotting the initial part of the orbit of a generic point: it is the set $K=\{(x,y)\,|\,0\le x\le 1, 0\le y\le \frac 1{1+x}\}$, see Fig.~\ref{fig.Gaussmeasure1}. This set is decomposed in a countable  Markov partition $K_n= \{(x,y)\,|\,\frac 1{n+1}\le x\le \frac 1n, 0\le y\le \frac 1{1+x}\}$, see  Fig.~\ref{fig.Gaussmarkov}.

\begin{figure}
\begin{center}
\scalebox{0.5}{
\includegraphics{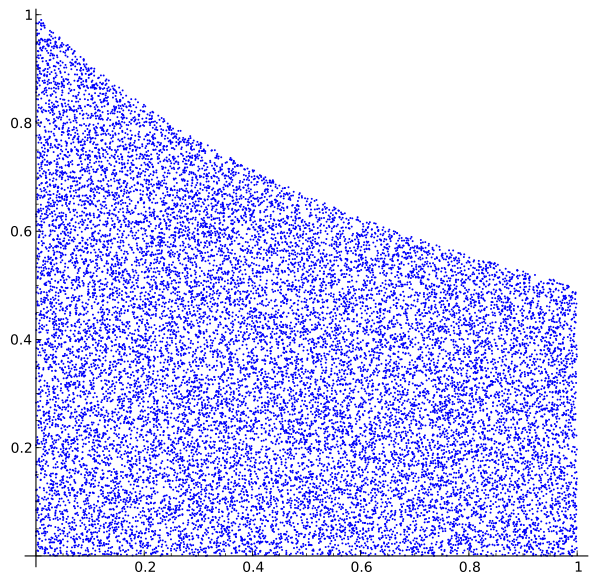}}
\caption{The domain of $\widetilde T$: $20, 000$ points of an orbit.}
\label{fig.Gaussmeasure1}
\end{center}
\end{figure}

\begin{figure}
\begin{center}
\scalebox{0.3}{
\includegraphics{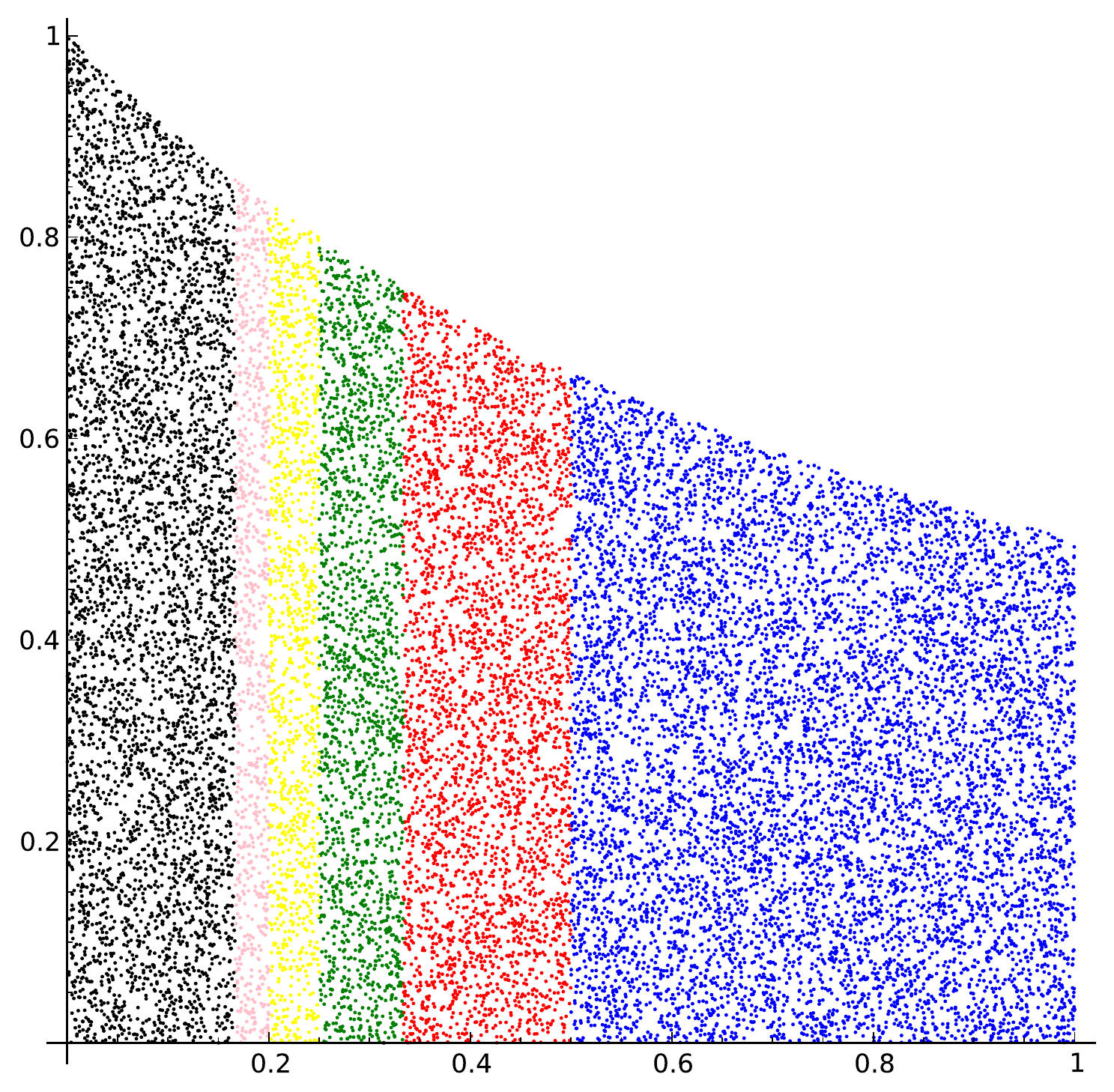}\includegraphics{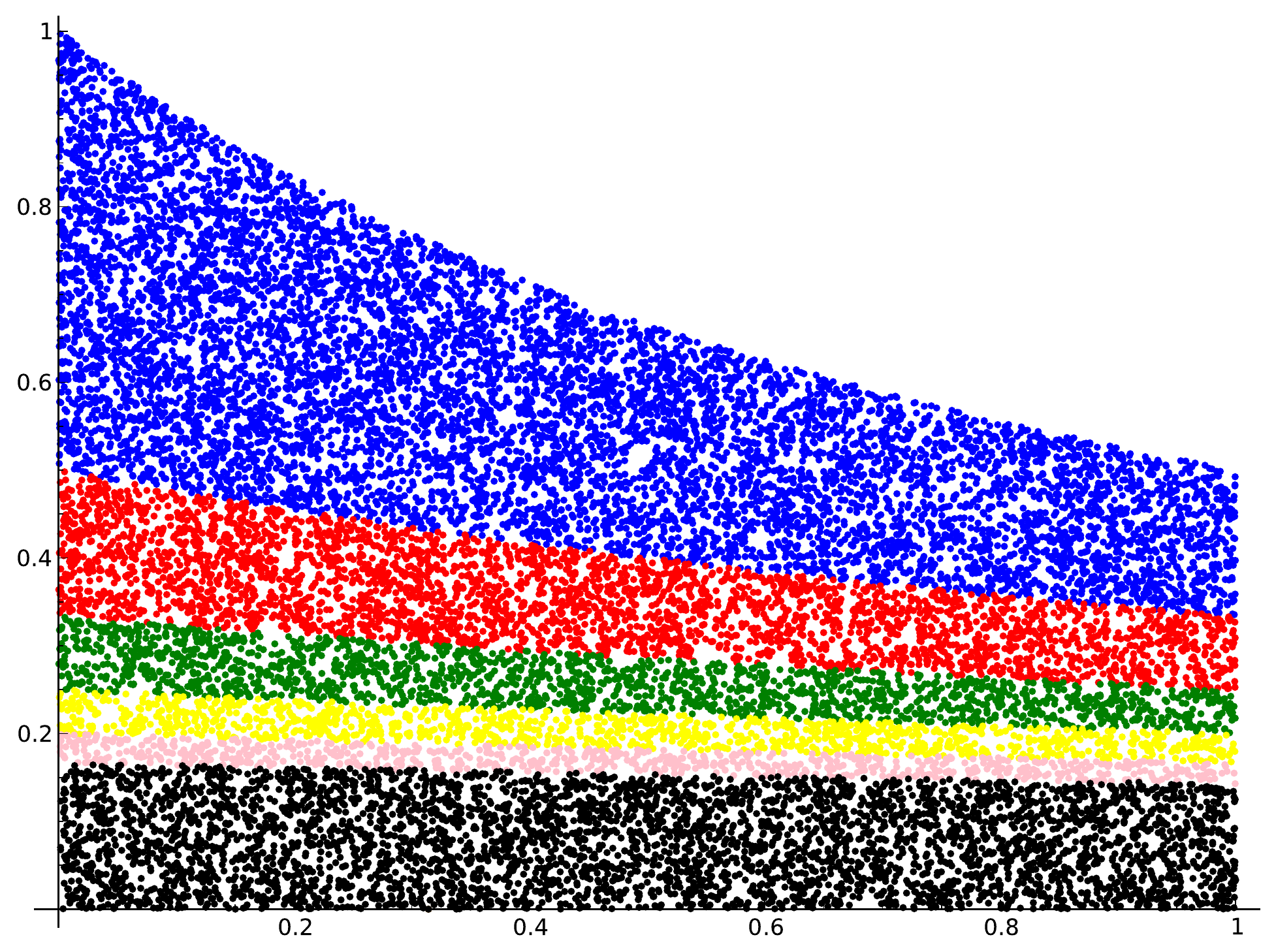}}
\caption{The Markov partition  of $\widetilde T$: five boxes and their image.}
\label{fig.Gaussmarkov}
\end{center}
\end{figure}

Alternatively, one can consider the other presentation of the natural extension; if we denote by $a(x):=[\frac 1x]$ the first partial quotient, it is defined by 

$$\widehat  T: [0,1]^2\to[0,1]^2\quad  (x,u)\mapsto  \left(\left\{\frac 1x\right\},\frac 1{a(x)+u}\right).$$

\begin{figure}
\begin{center}
\scalebox{0.5}{
\includegraphics{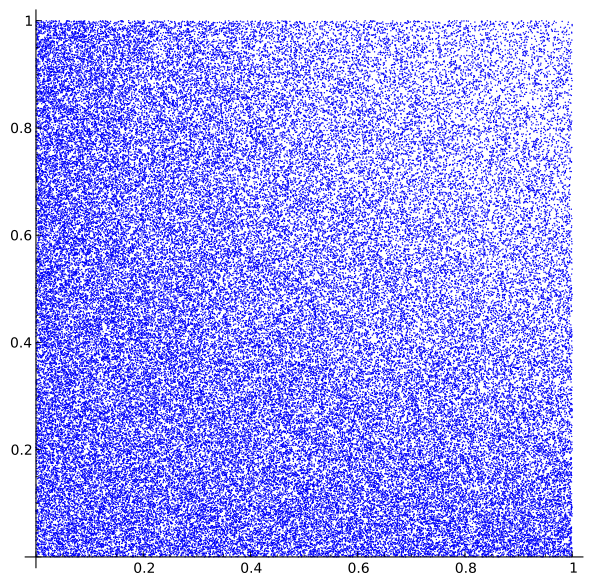}}
\caption{The domain of $\widehat T$: $20, 000$ points of an orbit. Note that the point distribution of the orbit is not uniform, compare with Fig.~\ref{fig.Gaussmeasure1}. }
\label{fig.Gaussmeasure2}
\end{center}
\end{figure}

This map preserves the invariant density $\frac {dx\, du}{(1+xu)^2}$, and we can see on Fig.~\ref {fig.Gaussmeasure2} that the point distribution of a generic orbit is not uniform.   The map is very natural from an arithmetic viewpoint: it has a countable Markov partition, $[\frac 1{n+1},\frac 1n]\times [0,1]$, the symbolic dynamic related to this partition is the full shift, and the bi-infinite symbolic dynamic is given by joining the continued fraction expansions of $x$ and $u$.  

Remark that this map is locally decreasing, hence it is not defined by a map of $\text{PSL}(2,\R)$ (but its square is). This is a problem which occurs frequently in the classical theory of continued fractions, and which explains why, in many theorems, one must consider parity of indices. There is a way to get rid of this by symmetrizing the Gauss map: define the {\em Truncation map } on $\R$ by $\text{trunc}(x)=\{x\}$ if $x>0$ and $\text{trunc}(x)=-\{-x\}$ if $x<0$, and the {\em symmetrized Gauss map}Ê$S$ on $[-1,1]$ by $S(x):=\text{trunc}(-1/x)$; this is a locally increasing function defined by elements of $SL(2,\R)$. Using the formula given above, one define a model for the natural extension as $\tilde S(x,y)=(S(x), x^2y-x)$; computation of a generic orbit immediately gives the domain for the natural extension of $S$, which factors  by symmetry on the natural extension of $T$; see  Fig.~\ref{fig.Gausssymmap}

\begin{figure}
\begin{center}
\scalebox{0.3}{
\includegraphics{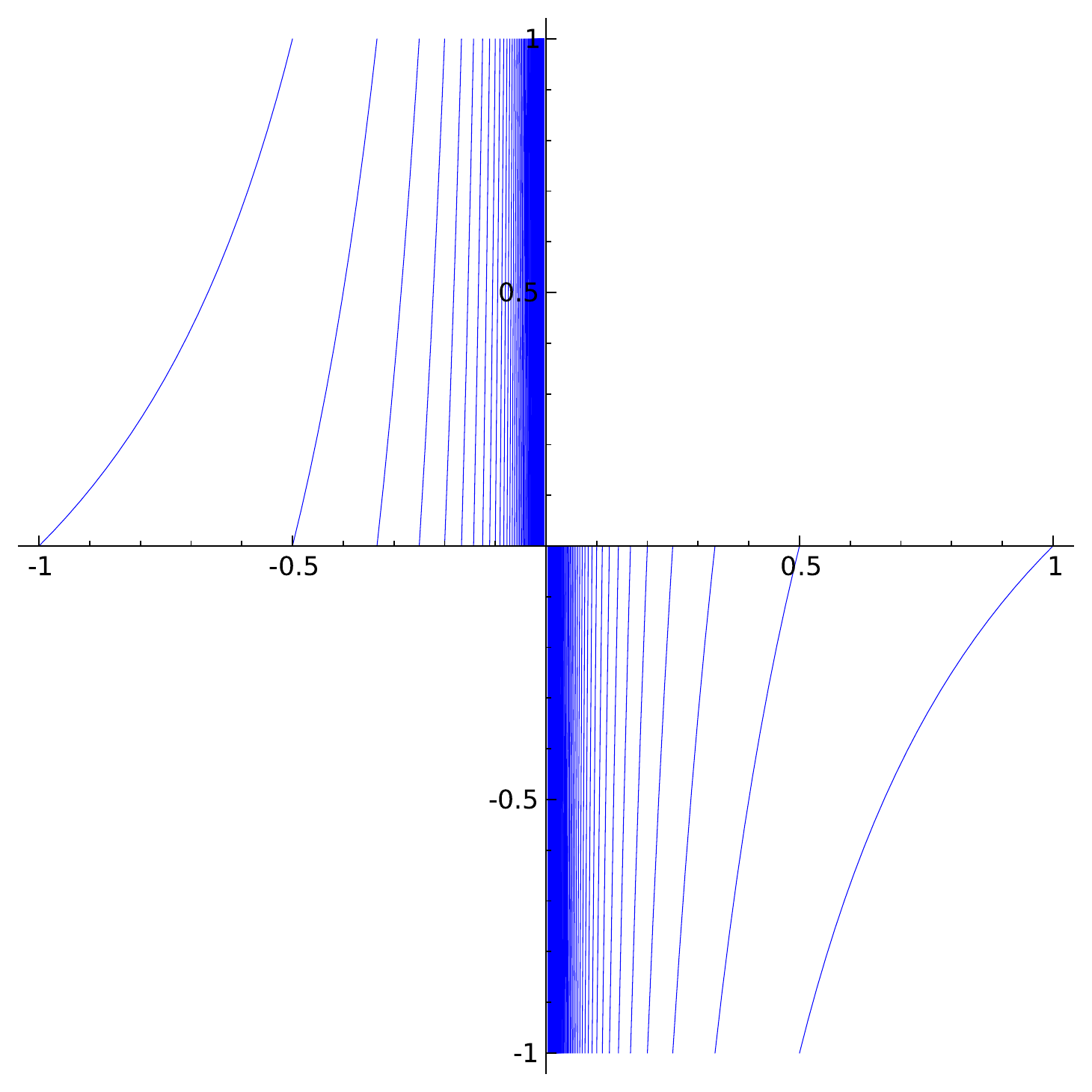} \phantom{a large space} \includegraphics{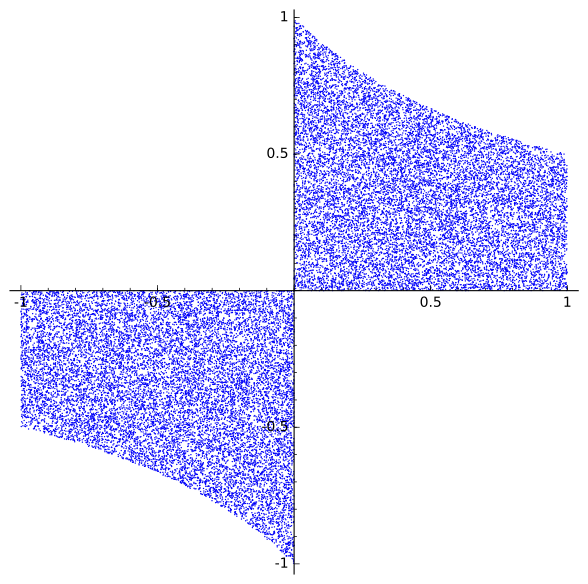}}
\caption{The symmetrized Gauss map, and $20, 000$ points of an orbit of its natural extension.}
\label{fig.Gausssymmap}
\end{center}
\end{figure}

The Gauss map admits an additive version, related to the additive euclidean algorithm, of which it is an acceleration, the {\em Farey map} $F$, defined on $[0,1]$ by $F(x)=\frac x{x-1}$ if $x<\frac 12$, and $F(x)=\frac 1x-1$ if $x>\frac 12$. Using the above formula, one can find a natural extension given by $\tilde F(x,y)=(\frac x{x-1}, (1-x)^2y+1-x)$ if $x<1/2$, and  $\tilde F(x,y)=(\frac 1x-1, x-x^2y)$ if $x>\frac 12$. This map is no longer expanding: it has an indifferent point at the origin, so our theorem does not apply in this case, and there is no invariant compact set. However, computer experiment immediately shows that the closed set $\{(x,y)\in \R^2|0\le x\le 1, 0\le y\le\frac 1x\}$ is invariant, and gives the infinite invariant measure $\frac {dx}x$, see Fig.~\ref{fig.Fareymap}. The indifferent fixed point precludes the possibility of an absolutely continuous invariant measure. 

\begin{figure}
\begin{center}
\scalebox{0.3}{
\includegraphics{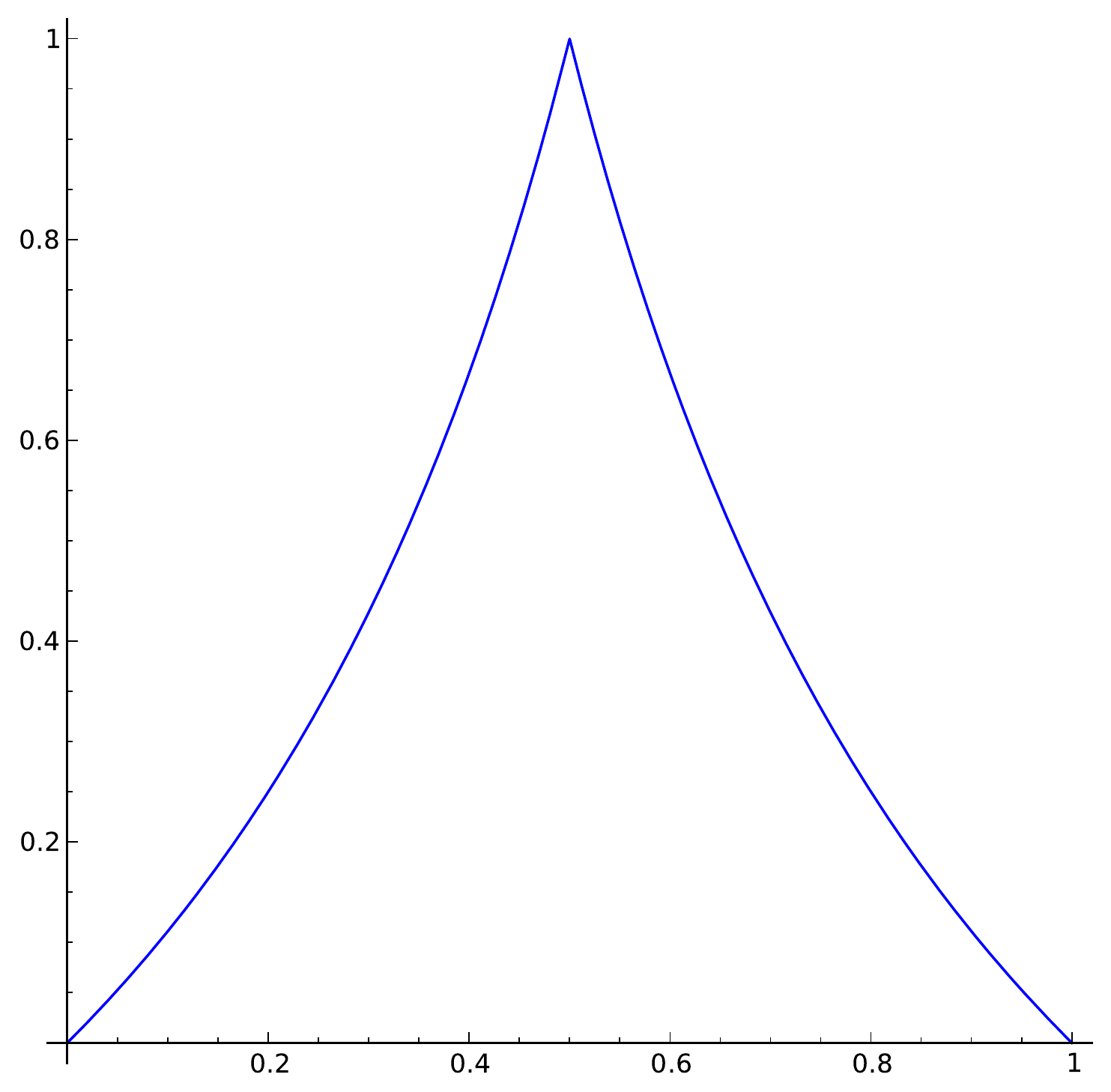} \phantom{a large space} \includegraphics{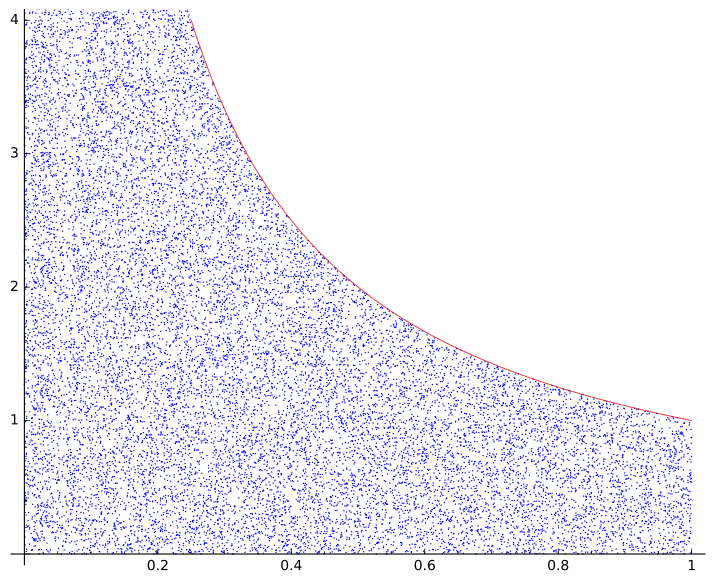}}
\caption{The Farey map, and $20, 000$ points of an orbit of its natural extension.}
\label{fig.Fareymap}
\end{center}
\end{figure}

The  Farey maps also admits an orientation preserving version $F^+$, defined by the same formula as $F$ for $x<\frac 12$, and by $F^+(x)=2-\frac 1x$ for $x>\frac 12$; its natural extension is given as before by $\widetilde F^+(x,y)=(\frac x{x-1},(1-x)^2y+1-x) $ for $x<\frac 12$, and by $\widetilde F^+(x,y)=(2-\frac 1x,x^2y-x) $ for $x>\frac12$. This map leaves invariant the domain $\{(x,y)\in \R^2|0\le x\le 1, \frac 1{x-1}\le y\le\frac 1x\}$, which gives for $F^+$ the invariant density $\frac 1{x(1-x)}$: this density is infinite in 0 and 1, because these are now two indifferent fixed points, see Fig.~\ref{fig.Farey2map}.

\begin{figure}
\begin{center}
\scalebox{0.3}{
\includegraphics{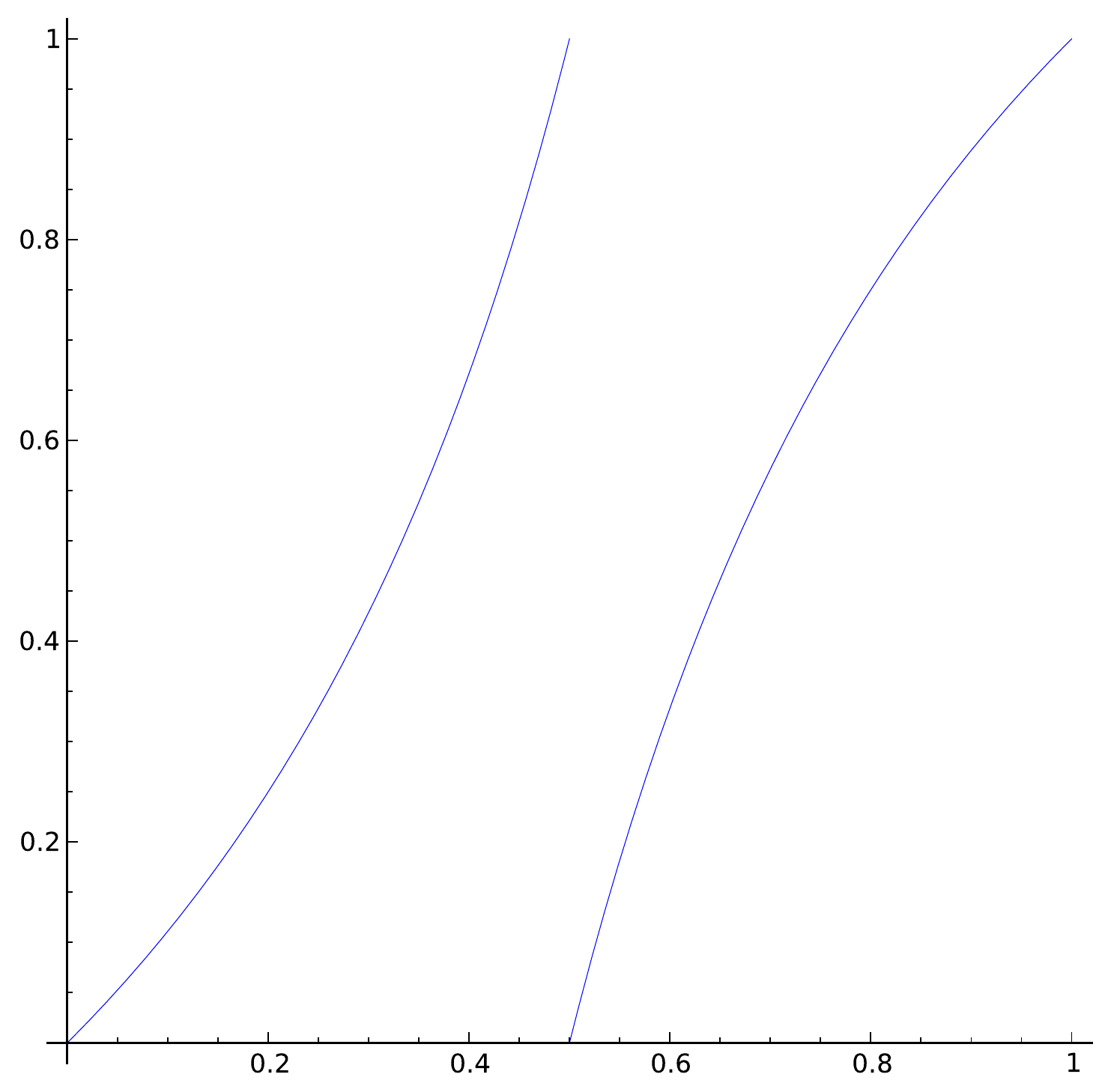}\phantom{a large space} \includegraphics{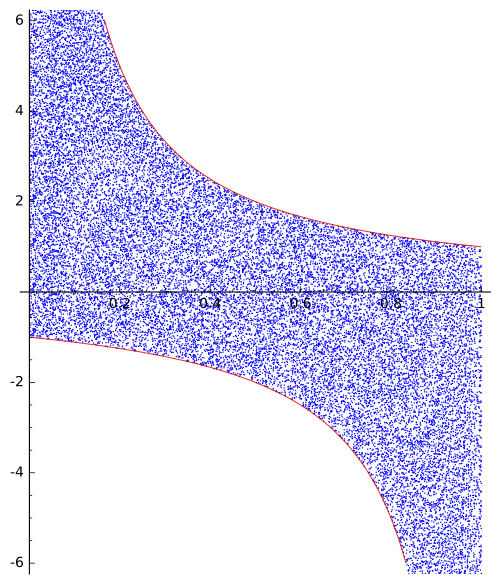}}
\caption{The orientation preserving Farey map, and $20, 000$ points of an orbit of its natural extension.}
\label{fig.Farey2map}
\end{center}
\end{figure}

\begin{rem} It is possible to conjugate the Farey map and the orientation preserving Farey map in such a way that the invariant density becomes constant; in this way, we obtain two curious maps; the first one is defined on $(0,+\infty)$ by 
$$x\mapsto |\log(e^x-1)|$$
and the second is defined on $\R$ by 
$$x\mapsto \text{sign}(x) \log(e^{|x|}-1)$$
It is readily verified, using the Ruelle equation, that these two maps preserve the (infinite) Lebesgue measure on their domain, and it is a consequence of the dynamical definition that this measure is ergodic; the maps are very close to the identity on most of their domain,  see Fig.~\ref{fig.Fareyconj}. One easily gives models for the natural extensions of these two maps, in forms of functions of two variables which leaves invariant $(0,+\infty)\times[-1,1]$ for the first map and $\R\times [-1,1]$ for the second.
\end{rem}

\begin{figure}
\begin{center}
\scalebox{0.3}{
\includegraphics{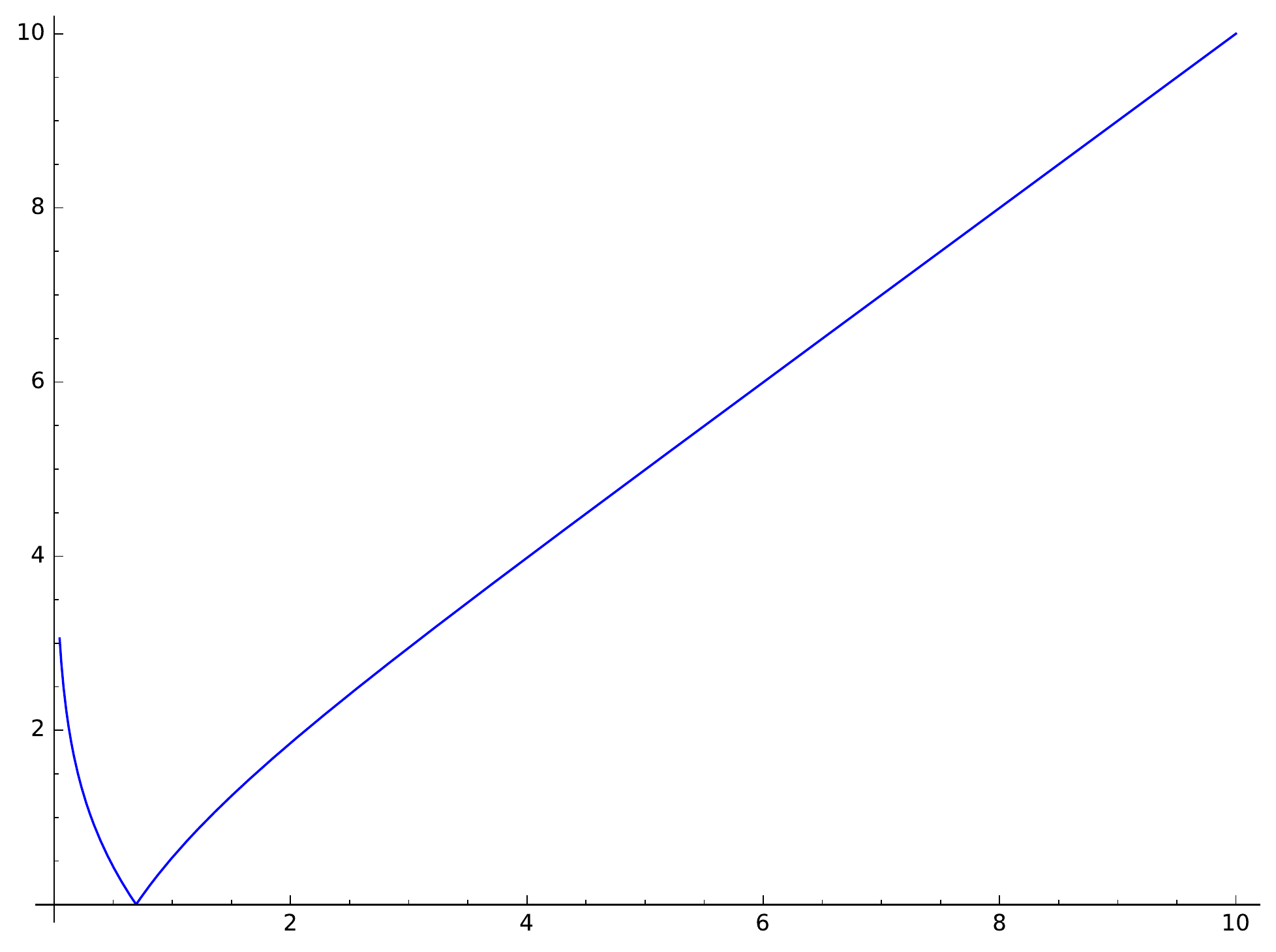}\phantom{a large space} \includegraphics{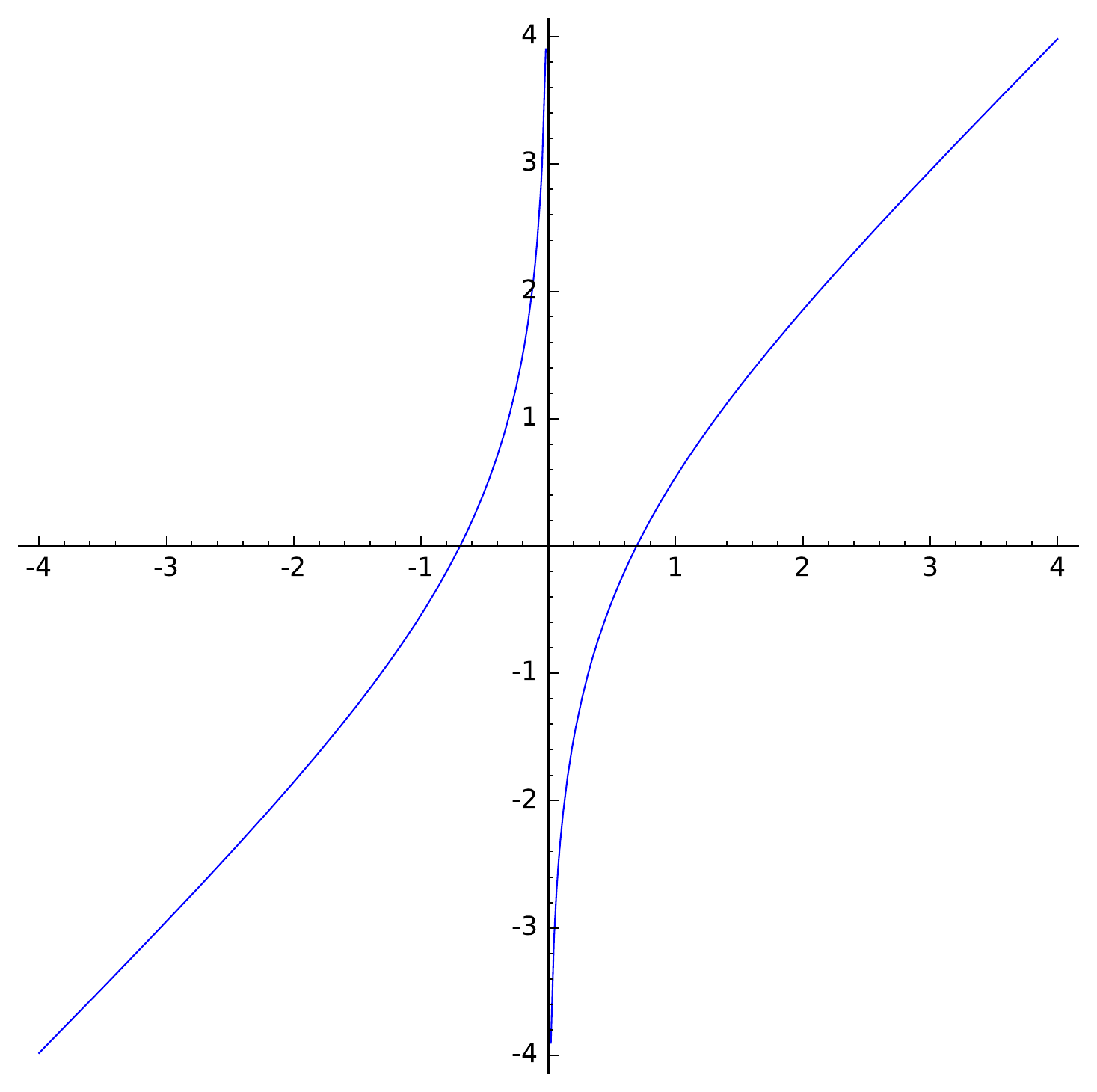}}
\caption{The graphs  of the Lebesgue measure preserving  Farey maps.}
\label{fig.Fareyconj}
\end{center}
\end{figure}

\subsection{Ralston continued fraction}

To study precisely the discrepancy of codings of rotations, D. Ralston \cite{Ra1}, \cite{Ra2} introduced a variant of the Gauss map; if we denote as above the classical Gauss map by $T$ and by $a(x)=[\frac 1x]$ the first partial quotient of $x$, he defined a map $R: [0,1]\to[0,1]$ by $R(x)=T^2(x)$ if $a(x)$ is even, $R(x)=\frac 1{1+T(x)}$ if $a(x)>1$ is odd, and $R(x)=1-x$ if $a(x)=1$.

Some of his results depend on the existence of an absolutely continuous invariant measure, which can be proved by classical techniques, but this is not trivial, since the map is quite complicated (see Fig. \ref{fig.Ralstonmap}); its set of discontinuity points has a countable set of accumulation points.

However, this map satisfies all the hypotheses needed to get a invariant compact set; a tentative model for the natural extension is readily computed, see the right part of Fig. \ref{fig.Ralstonmap}, and shows that the invariant set is defined by the equations $\frac 1{x-1}\le y\le \frac 1{x+1}$ for $0\le x\le \frac12$, and $0\le y\le \frac 1x$ for $\frac 12\le x\le 1$. This implies that the invariant probability density $\phi$ is given by $\phi(x)=\frac {2C}{1-x^2}$ for $0\le x< \frac 12$, and by $\phi(x)=\frac Cx$ for $\frac 12\le x\le 1$, with $C=\frac 1{\log(6)}$.

This could be explicitly proved, either by showing that the Ruelle equation is satisfied (which would be rather tedious, since listing all the antecedents of a point is not simple), or, in a more informative way, by understanding how the images of the various elements of the Markov partition are situated with respect to each other.
 
 \begin{figure}
\begin{center}
\scalebox{0.3}{
\includegraphics{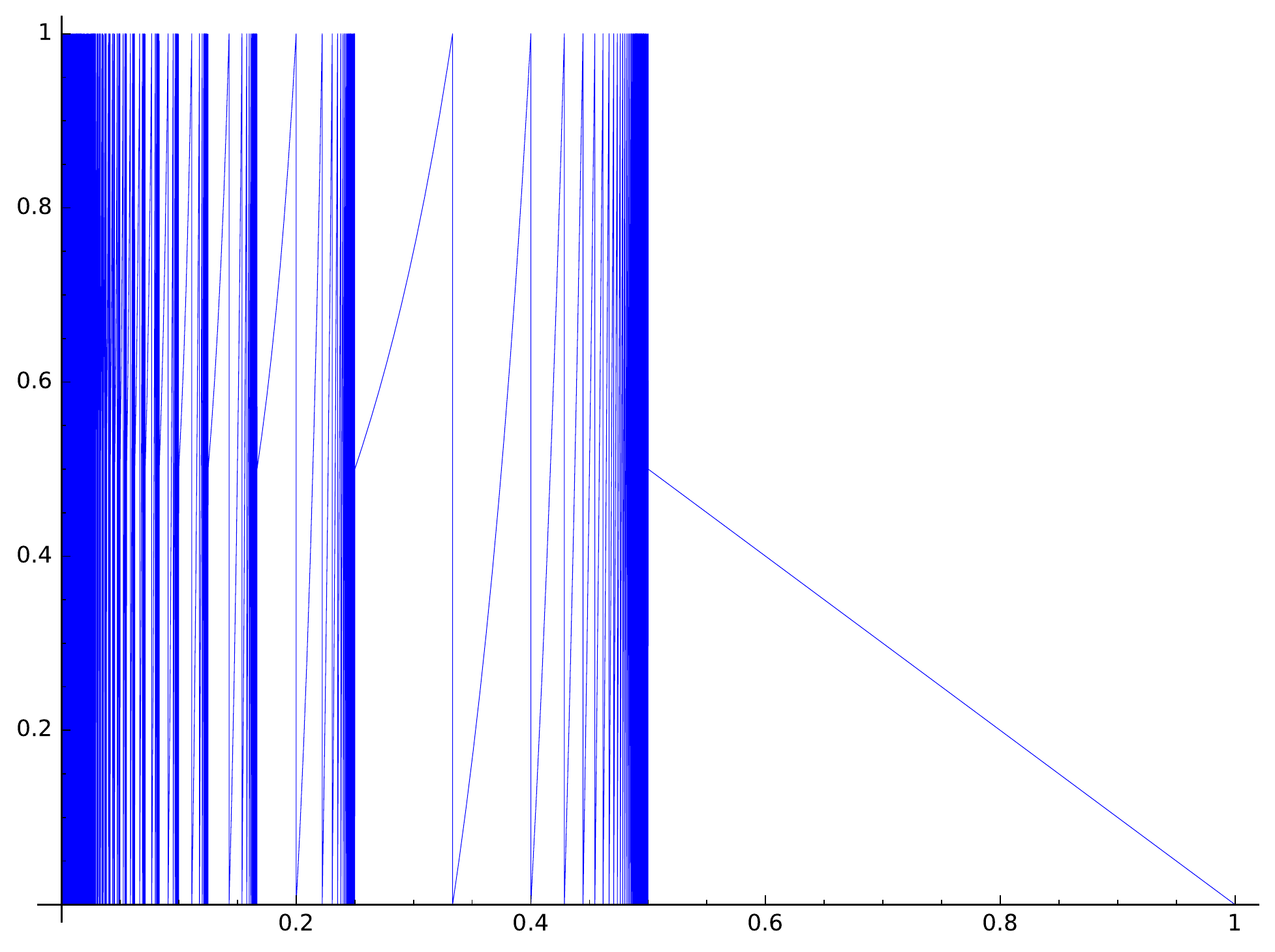} \phantom{a large space} \includegraphics{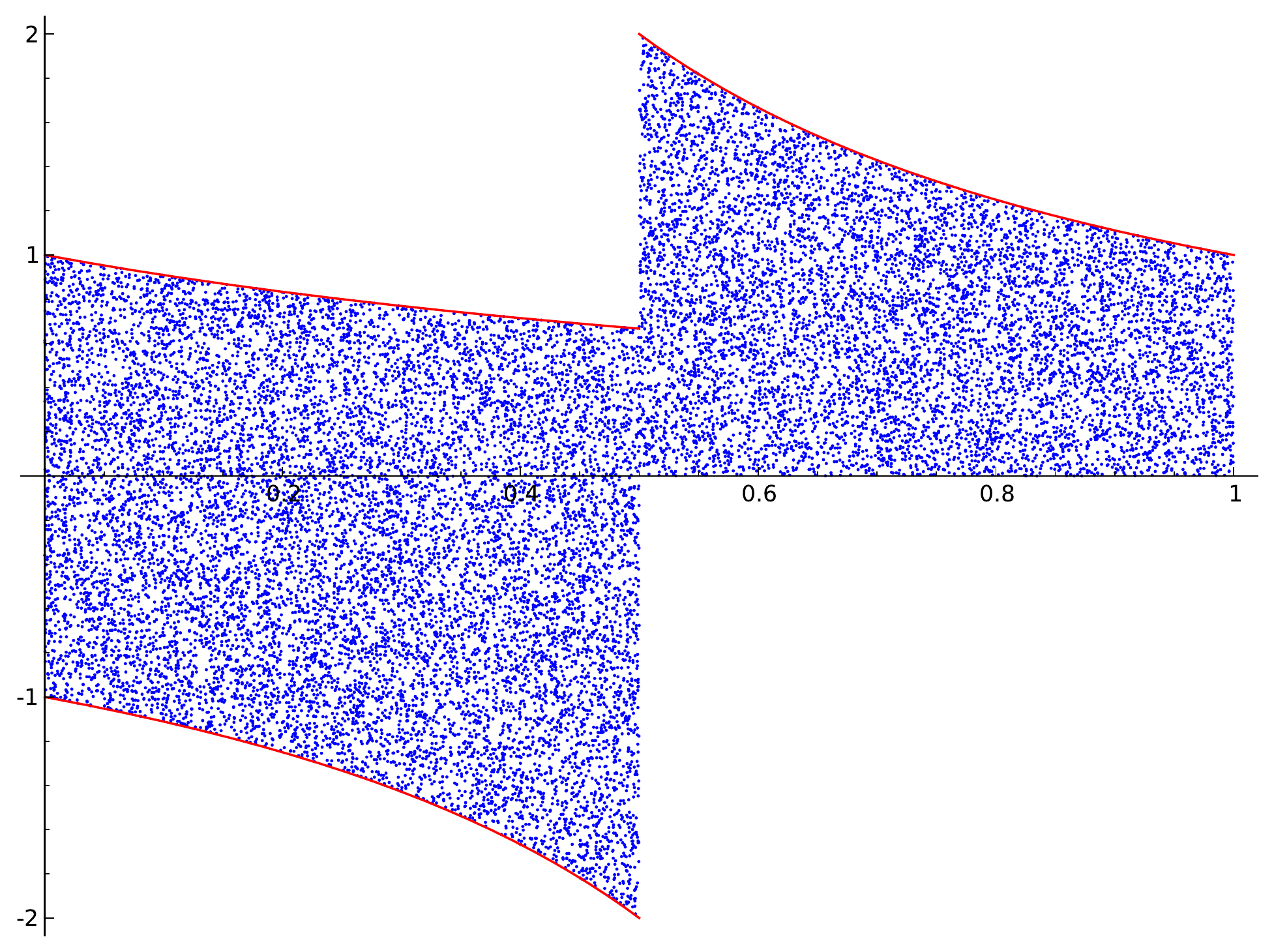}}
\caption{The Ralston map, and $20, 000$ points of an orbit of its natural extension.}
\label{fig.Ralstonmap}
\end{center}
\end{figure}

\subsection{Nakada $\alpha$-continued fractions}   There are many variants of the Gauss map, including those arising from the so-called semi-regular continued fractions.  The best known of these are the nearest integer continued fractions and the backwards continued fractions.   Nakada \cite{N} introduced a very interesting family of continued fractions, depending on a real parameter $\alpha \in [0,1]$ and called the (Nakada) $\alpha$-continued fractions.   On $[\alpha-1, \alpha)$, one defines $T_{\alpha}(x) = 1/|x| - \lfloor 1/|x| + 1 - \alpha\rfloor$.   The values $\alpha =1,1/2, 0$  give  the regular, the nearest integer, and a variant of the backwards  continued fractions, respectively.      

Nakada used planar models of the natural extensions to determine invariant absolutely continuous measures for $T_{\alpha}$, $\alpha \ge 1/2$.   Kraaikamp \cite{K} later confirmed this using his theory of $S$-fractions.     Moussa, Cassa and Marmi \cite{MoussaCassaMarmi:99} extended this to   $\alpha \ge \sqrt{2}-1$.   Luzzi and Marmi  \cite{LuzziMarmi08}  extended this for certain rational values of $\alpha$ in $[0,1]$, in a work that revived interest in these matters.   Carminati and Tiozzo ~\cite{CT} and Kraaikamp-Schmidt-Steiner \cite{KSSt}  gave the result for all $\alpha$.  Central to the study of this family of dynamical systems has been the relation of the entropy of $T_{\alpha}$ to the mass of a planar model of its natural extension.   See the aforementioned papers and references for more on this matter.      

Explicit presentations of (number theoretic versions of) planar models of the natural extensions occur in each of \cite{N}, \cite{LuzziMarmi08}, \cite{CT}, \cite{KSSt}.    Note that the map $T_{\alpha}$ is increasing on the negative portion of its domain and decreasing on the positive;  this mixed nature seems to be the cause of the non-product structure of the natural extension.  Indeed,  it is possible to define an orientation preserving version, replacing $1/|x|$ by $-1/x$, which seems better behaved in this respect.  For example, see the natural extensions of Katok-Ugarcovici's \cite{KatokUgarcovici10} $a,b$-continued fractions.  (See also \cite{CaltaKS}.) 

 \begin{figure}
\begin{center}
\scalebox{0.4}{
\includegraphics{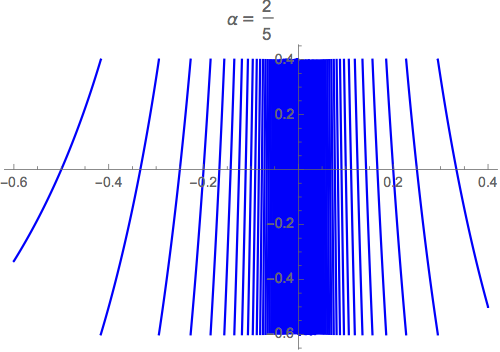} \phantom{a large space} \includegraphics{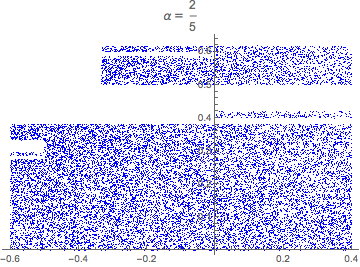}
}
\caption{The Nakada $\alpha$-CF map, when $\alpha = 2/5$, and $20, 000$ points of an orbit of its natural extension.  Note that the domain is not everywhere a (local) product.}
\label{fig.AlpPt39Plot}
\end{center}
\end{figure}

\subsection {Rosen and Veech continued fractions}
There are various ways to associate interval maps to a Fuchsian group.   Rosen~\cite{R} defined a continued fraction for each  the Hecke triangle Fuchsian group.    For $q\in \N, q \geq 3$ and $\lambda = \lambda_{q}= 2 \cos \frac{\pi}{q}$, on the interval $[\, -\lambda/2,\lambda/2\,)\,$ the associated map is 
$T_{q}(x) = \left| \frac{1}{x} \right| - \lambda \left \lfloor\,  \left| \frac{1}{\lambda x} \right| + 1/2\right\rfloor$.  (Thus, when $q=3$ one has the nearest integer map.)   The metric properties of these continued fractions by way of planar natural extensions has seen a great deal of activity in the last 20 years, related work includes \cite{S}, \cite{Nakada92}, \cite{BKS}, \cite{GroechenigHaas96},  \cite{MayerMuehlenbruch10}, \cite{MayerStroemberg08}, \cite{AS}.   
Several authors have also studied one parameter deformations of these and related continued fraction maps,  (called $\alpha$-Rosen maps), see in particular  \cite{DajaniKraaikampSteiner09}. 

In these settings,  the method of this paper can be used to at least gain insight into the possibilities for planar models of the natural extensions of the maps being studied. 

Arnoux and Hubert \cite{AH} introduce continued fractions motivated by the study of Veech groups in the setting of translation surfaces.   Their continued fractions are directly related to subgroups of the Hecke groups that underly the Rosen continued fractions.  In \cite{AS2}, we used natural extensions (informed by the technique of this paper) to compare these two families of continued fractions.    Note that Smillie-Ulcigrai~\cite{SU}  introduced another continued fraction for the study of the geodesics of the translation surface arising from identifying opposite sides of the regular octagon.    

\subsection{Hei-Chi Chan map}

One can show that any irrational number in $[0,1]$ can be written in a unique way:

$$x=\cfrac {2^{-a_1}}{1+{\cfrac{2^{-a_2}}{1+\cdots}}}$$

Using this particular continued fraction, Hei-Chi Chan, in \cite{Ch}, defines a generalized Gauss map with an invariant density of the form $\frac c{(1+t)(2+t)}$. This map admits an additive variant, for which the computations are particularly simple; we define $S_a$ on $[0,1]$ by $S_a(x)=2x$ if $0\le x\le \frac 12$, and $S_a(x)=\frac 1x-1$ if $\frac 12\le x\le 1$. The heuristic leads us to conjecture a natural extension of the form $\tilde S_a(x,y)=(2x,y/2)$ if $0\le x\le \frac 12$, and $\tilde S_a(x,y)=(1/x-1, x-x^2y)$ if $\frac 12< x\le 1$, and an experiment (see Fig. \ref{fig.Heichichanaddmap}) shows that the compact set $\{(x,y)|0\le x\le 1, 0\le y\le \frac 1{1+x}\}$ is invariant by $S_a$.

By a nice coincidence, this additive map has the same invariant domain and the same invariant density as the Gauss map. Note that this map has a fixed point, but it is not indifferent, which explains why this additive map has a finite invariant measure.

 \begin{figure}
\begin{center}
\scalebox{0.3}{
\includegraphics{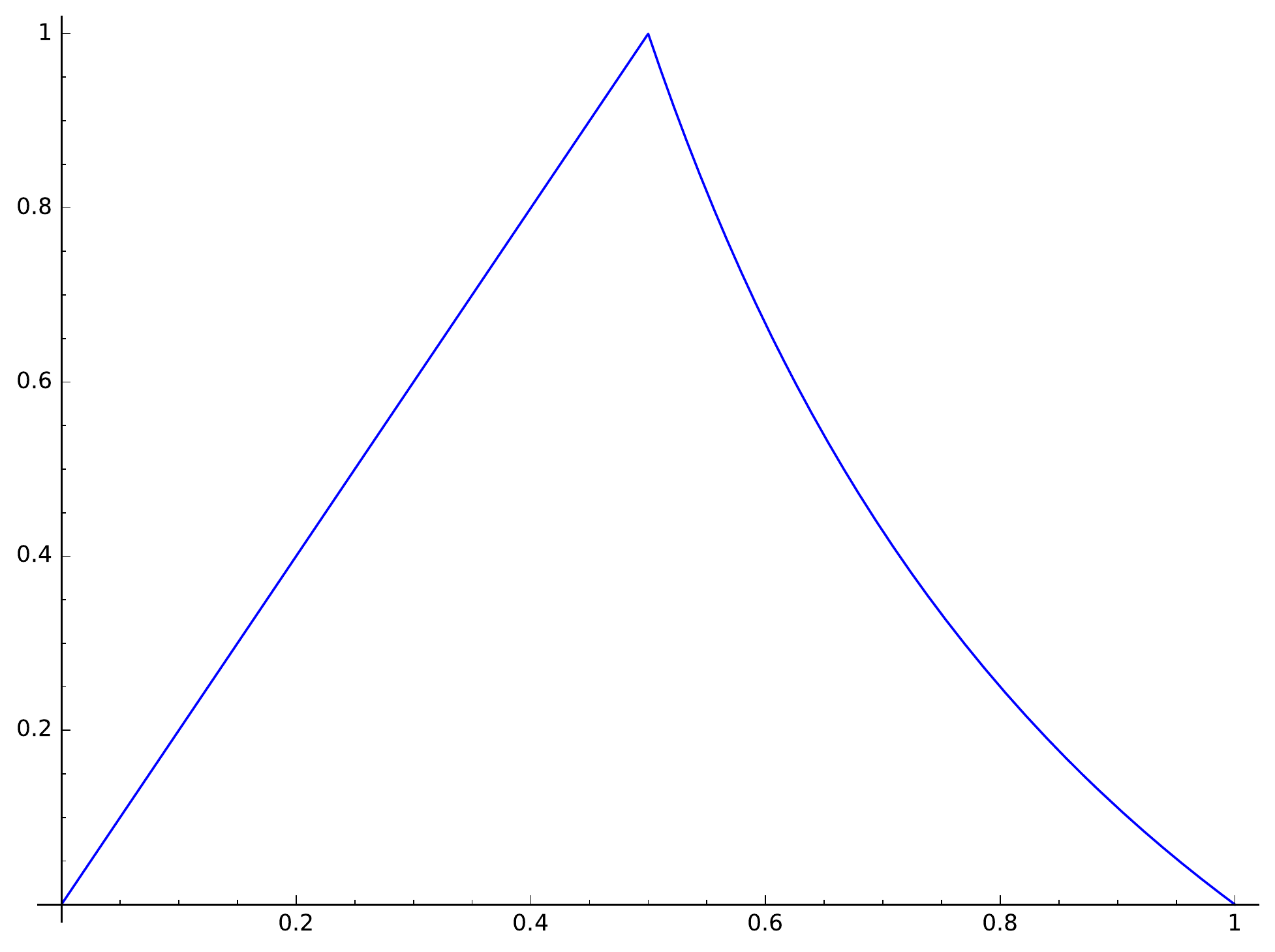} \phantom{a large space} \includegraphics{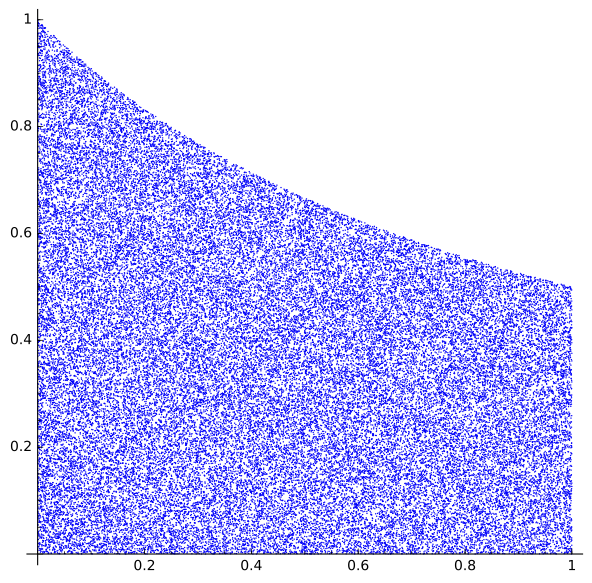}}
\caption{The additive Hei-Chi Chan map, and $20, 000$ points of an orbit of its natural extension.}
\label{fig.Heichichanaddmap}
\end{center}
\end{figure}

The map actually studied by Hei-Chi Chan is  an acceleration of $S_a$; define, for $x\in (0,1)$, $n(x)=\sup\{k\in \N|2^kx\le 1\}$; the multiplicative Hei-Chi Chan map is given by $S_m(x)=\frac 1{2^{n(x)}}-1$. This map has infinitely many decreasing branches, which accumulate exponentially fast to the vertical axis, see Fig. \ref{fig.Heichichanaddmap}.

Numerical experiment shows that the invariant set is given by the equation $\frac1{2+x}\le y\le \frac 1{1+x}$, giving the invariant density $\frac c{(1+x)(2+x)}$ found by Hei-Chi Chan in his paper. The domain of the map $\tilde S_a$ is in fact the image of the Markov box $\frac 12\le x\le 1$, which is in fact natural: it consists in stopping when we have done a series of multiplication by 2 and an inversion, that is, each time we pass the rightmost Markov box.

 \begin{figure}
\begin{center}
\scalebox{0.3}{
\includegraphics{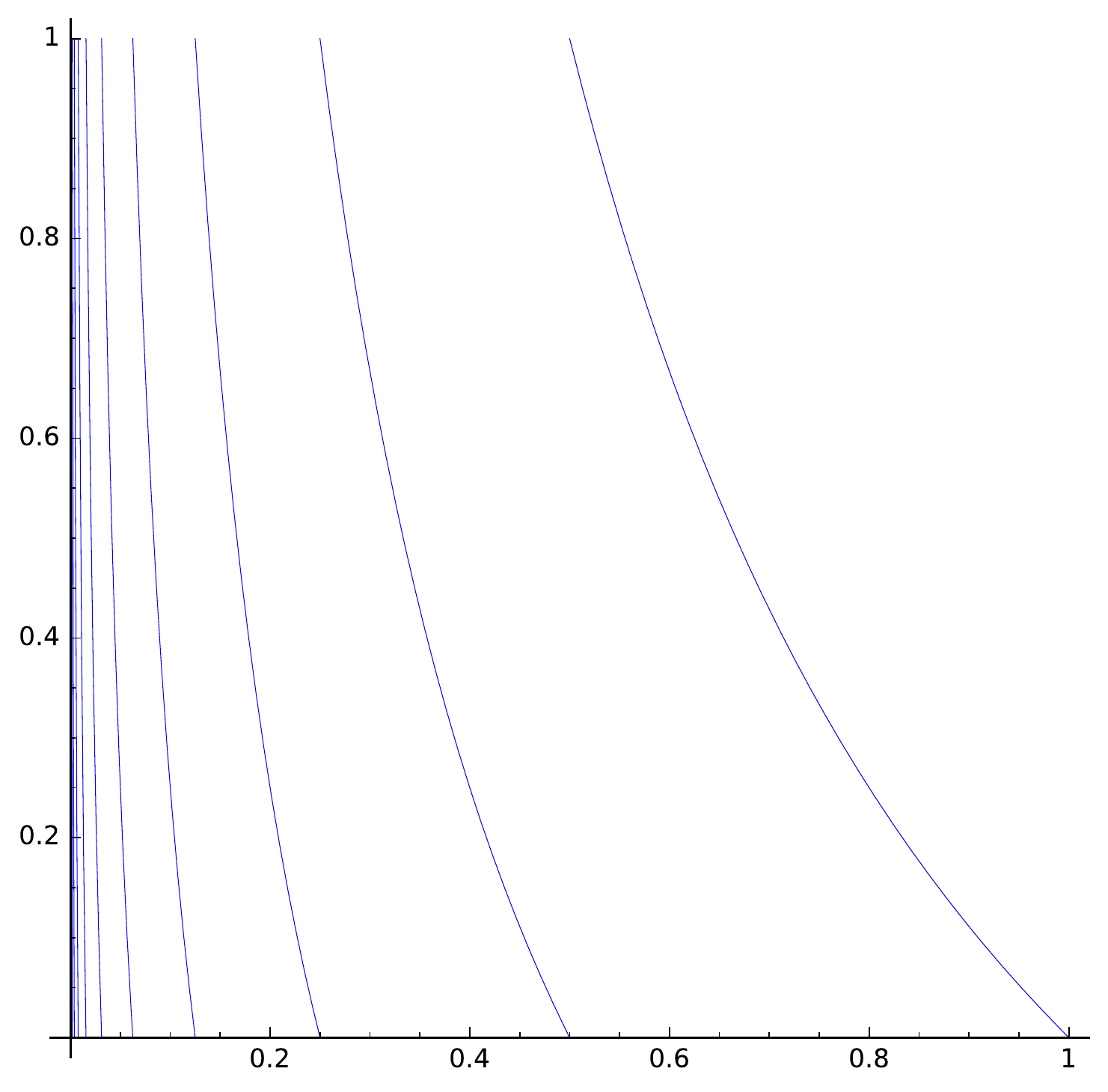} \phantom{a large space} \includegraphics{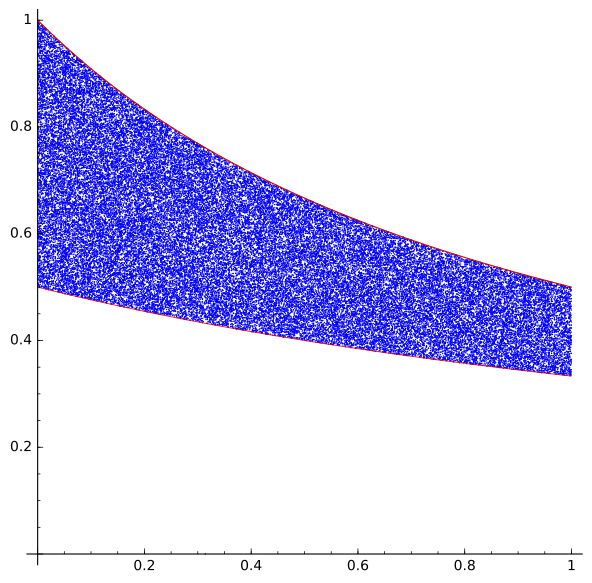}}
\caption{The multiplicative Hei-Chi Chan map, and $20, 000$ points of an orbit of its natural extension.}
\label{fig.Heichichanmultmap}
\end{center}
\end{figure}

\begin{rem} These maps are linked to a curious binary GCD algorithms on integers, defined as follows. Consider two integers $0<p<q$. First divide them by the largest power of 2 that divides both of them; hence we can suppose that one of them is odd, and their GCD is odd. If $2p<q$, and $q$ is even, divide $q$ by $2$. If $2p<q$, and $q$ is odd, multiply $p$ by $2$. If $2p\ge q$, substract $p$ from $q$ and  reorder; continue until $p=0$.

This algorithm preserves the GCD; the value of $q$ is nonincreasing, and it is ultimately decreasing if $p>0$, hence the algorithm terminates with $p=0$ in finite time, the value of $q$ giving the GCD.

This algorithm is a variant of the very efficient binary GCD algorithm studied by Vall\'ee \cite{Va}; the maps $S_a$ and $S_m$ might prove interesting to study the properties of this algorithm.
\end{rem}

\begin{rem}
In opposition to the previous examples, this map does not come from elements of $SL(2,\Z)$ or of a discrete group of $SL(2,\R)$; this does not prevent the heuristics to work efficiently. However, the importance of powers of 2 in the map, as well as the related binary GCD algorithm on the integers, hints to the idea that there should be a 2-adic version of this map and its natural extension.
\end{rem}

\subsection{Hurwitz complex continued fraction}

The Hurwitz continued fraction is simply the extension to the complex plane of the nearest integer continued fraction. More precisely, we consider the Gauss integers $n+mi$, with $n,m$ integers, and for any complex number $z$, following the notations of Nakada, we denote by $[z]_2$ the nearest Gaussian integer (we ignore the measure 0 case when there is more than one such integer). The Hurwitz map $H$ is defined on the unit square $\{z\in \C|-\frac 12<\Im(z), \Re(z)<\frac 12\}$ by $H(z)=\frac 1z-\left[\frac 1z\right]_2$ (similar maps can be defined in an analogous way for other tilings of the complex plane).

 \begin{figure}
\begin{center}
\scalebox{0.3}{
\includegraphics{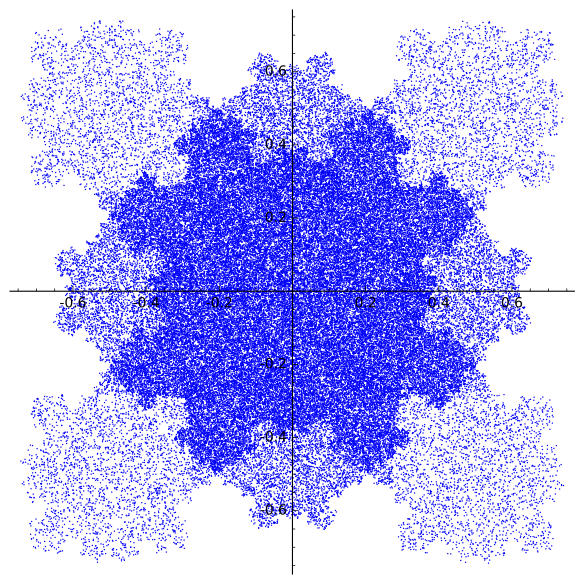} \phantom{a large space} \includegraphics{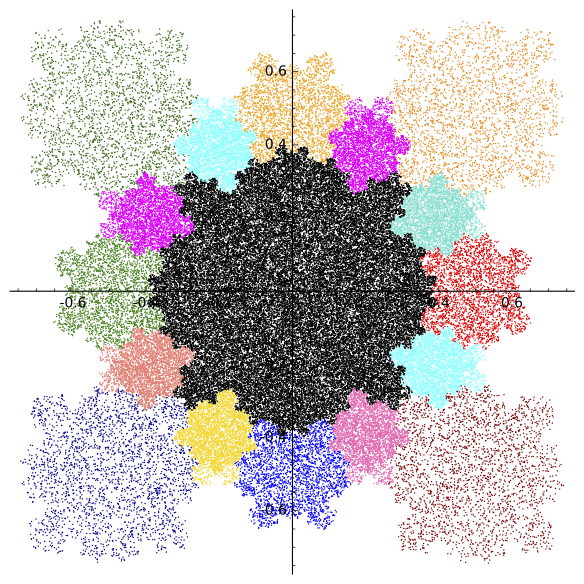}}
\caption{100 000 points of an orbit of the natural extension of Hurwitz map (projected on the second coordinate), colored in the right figure according to the previous partial quotient.}
\label{fig.Hurwitzmap}
\end{center}
\end{figure}

This map and its natural extension has been recently the object of study by Ei, Ito, Nakada and Natsui, see \cite{EINR}; the structure seems quite complicated. One can use the arithmetical version of the natural extension $\tilde H(z,w)=(\frac 1z-\left[\frac 1z\right]_2, \frac 1{w+\left[\frac 1z\right]_2}\,)$, or the conjugate form which preserves Lebesgue measure, and our theorem applies, hence there is a compact invariant set $K$.

It is easy to do numerical experiments, but difficult to interpret their result because they are in dimension 4, and do not allow simple visualization. It seems however that the invariant compact set has nonempty interior, hence gives a natural extension; but the characterization of the domain of the natural extension is not simple, as it seems to have a fractal boundary. Fig. \ref{fig.Hurwitzmap} shows 100 000 points of an orbit, projected on the second coordinate; it seems quite apparent that the density is not the same everywhere. This receives a first explanation when we color the orbit according to the previous partial quotient: the incomplete partial quotients of modulus at most $\sqrt 5$ correspond to outer, and less frequent, pieces of the natural extension.

It seems probable that the family of sets $K_z=\{w\in \C|(z,w)\in K\}$ is locally constant on a finite (17) number of subsets of the unit square. It would be interesting, first to prove it, and then to understand if it still holds when the lattice varies. It would of course be more interesting to determine explicitly the sets $K_z$, their measure and their boundary.

\subsection{Some examples in higher dimension : Brun and Jacobi-Perron}

Our technique also works in higher dimension. Some examples can be found in \cite{AN} and \cite{AL}, where it was applied to recover the explicit formula for the invariant density of several multidimensional algorithms as Brun and Selmer,  and to find this formula for other algorithms such as the reverse algorithm and the Cassaigne algorithm. 
 \begin{figure}
\begin{center}
\scalebox{0.1}{
\includegraphics{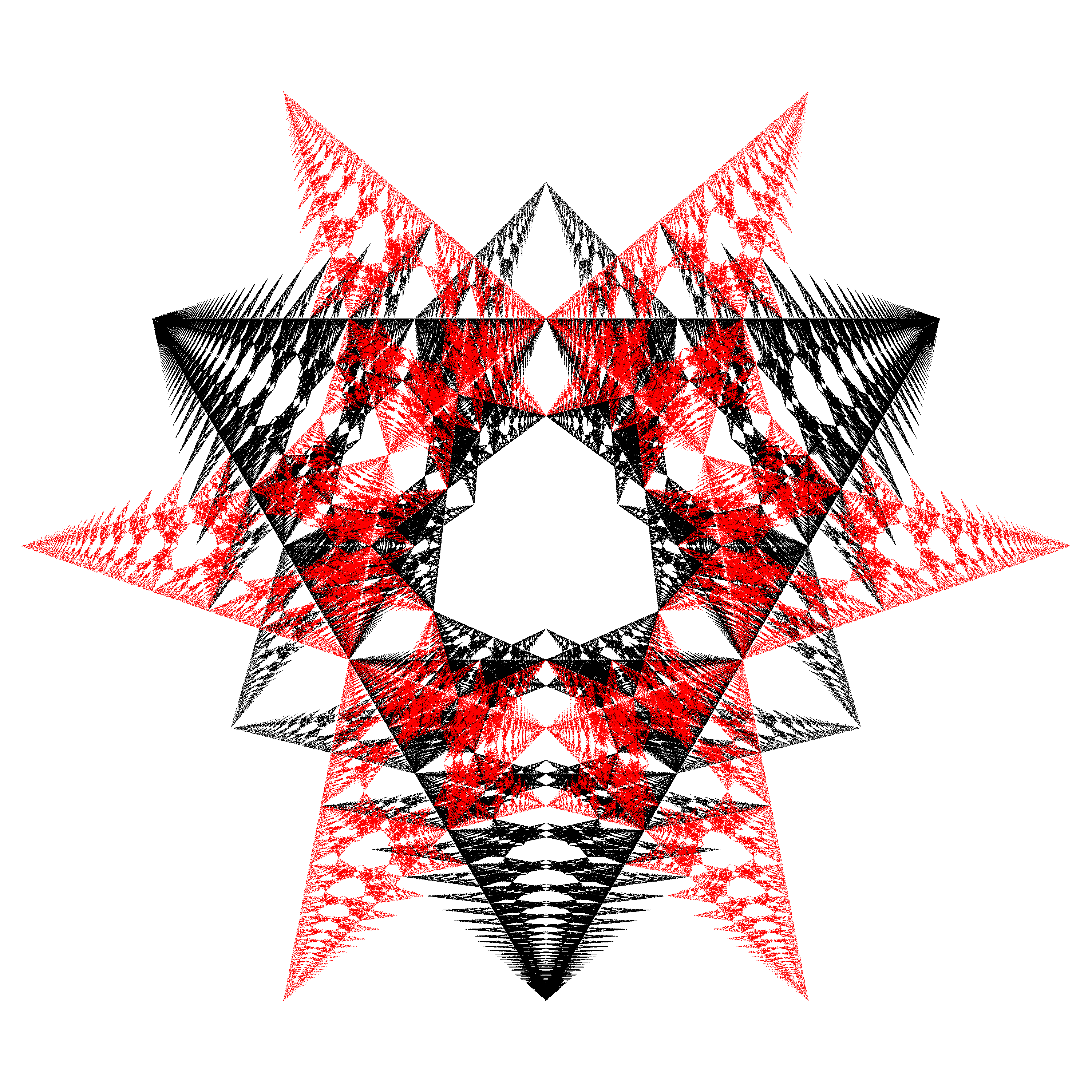}}
\caption{2 000 000 points of an orbit of the conjectured natural extension of the modified Poincar\'e algorithm (projected on the dual coordinates), colored  according to the coding of the preimage. (Courtesy of S\'ebastien Labb\'e.)}
\label{fig.PoincareOrbit}
\end{center}
\end{figure}

Note however that in some cases it fails spectacularly, for example for the classical Jacobi-Perron algorithm, or for the AR-Poincar\'e algorithm (see \cite{BL}); Fig. \ref{fig.PoincareOrbit}, which is due to S\'ebastien Labb\'e shows 2 million points of an orbit. The picture, despite its undeniable esthetic appeal, gives little clue as to  a possible model for the natural extension, and its invariant measure. In particular, it can be seen that sets of points coming from the inner triangle (in red) and from the outer triangles (in black) seem to intersect on a large set, which might explain the degeneracy in this case.

\section{Further questions} 

This paper leaves many questions unanswered. The first, and most important,   would be to give sufficient conditions so that the compact invariant set K given by the main theorem has positive measure. In particular, is  there a simple verifiable condition guaranteeing that $K$ and $\widetilde T(K)$, in the notation of Theorem~\ref{th:natext}, share the same positive measure? (One should expect a condition reminiscent of the open set condition for IFS). A first step should be to consider the case when the matrices describing a piecewise homographic algorithm generate a lattice in  $\text{SL}(2, \mathbb R)$; this case is similar to the Bowen-Series interval maps, and one might expect to find the natural extension as a return map of the geodesic flow which factors over the given interval map.

More precisely, if the piecewise homographic map is expanding, it is clear that $cx+d$ is bounded, because $(cx+d)^2<k$ by hypotheses. But $c$ has no reason to be bounded; can we prove that $c(cx+d)$ is always bounded? This is the case in all examples. 
It would also be interesting to find such a condition for positive measure in higher dimension, and when this is the case, to find a classical  object for which there is a flow corresponding to the suspension of the natural extension of the given map.

As we have seen, one also has convergence, albeit to a non-compact set, in some cases of weakly contracting maps, and the theorem should obviously be extendable to those cases, which are of interest since most additive types of continued fractions admit indifferent fixed points. Note also that these indifferent fixed points have different effects in dimension 1, where they lead to unbounded invariant density and to infinite invariant measure, and in higher dimension, where the  invariant density can have a pole while the total measure remains finite. 

Note that our basic example is related to matrices in $SL(2,\Z)$, hence orientation preserving and unimodular. However, we have seen that the heuristic extends quite well to other situations, either orientation reversing ($\alpha$-type continued fraction) or non-unimodular (Hei-Chi Chan continued fraction). Is there a more general setting which includes all these cases? In particular, can we extend this to a $p$-adic setting? Is it true that in the orientation preserving case, the domain of the natural extension is connected?

Finally, to use more efficiently this heuristic method, are there effective ways to visualize and understand the domains we obtain in higher dimension, maybe by animated views of slices of the domain?

\appendix
\section{A letter from Gauss to Legendre on continued fractions, translated from the French}

January 30, 1812

Sir, 

I tell you one thousand thanks for the two memoirs you made me the honor to send me and that I received these past few days. The functions you discuss there, as well as the questions of probabilities on which you prepare a large work, have a great appeal for me, although I myself have worked  little  on those last. I remember nevertheless a curious problem, which I considered 12 years ago, but that I could not then solve to my satisfaction. You might interest yourself with it for a little time, in which case I am sure that you will find a more complete solution. Here is the problem. Let M be an unknown quantity between the limits 0 and 1, for which all values are, either equally probable, or more or less following a given law. We suppose it converted in a continued fraction 

$$M=\cfrac {1}{a'+{\cfrac{1}{a''+\text{etc}}}}$$

What is the probability that, stopping the expansion at a finite term $a^{(n)}$, the following fraction 

$$\cfrac {1}{a^{(n+1)}+{\cfrac{1}{a^{(n+2)}+\text{etc}}}}$$

\noindent
be between the limits 0 and $x$? I denote it by $P(n,x)$, and I have, supposing for $M$ all values equally probable, 

$$P(0,x)=x;$$

\noindent
$P(1,x)$ is a transcendental function depending on the function

$$1+\frac 12+\frac 13+\cdots+\frac 1x.$$

\noindent
which Euler calls unexplainable and on which I just gave several researches in a memoir presented to our society of sciences which will soon be printed. But for the cases where $n$ is larger, the exact value of $P(n,x)$ seems intractable. However, I found by very simple reasoning that, for infinite $n$, one has

$$P(n,x)=\frac{\log(1+x)}{\log 2}$$

But the efforts I made during my researches to assign

$$P(n,x)-\frac{\log(1+x)}{\log 2}$$

\noindent
for a very large, but not infinite, value of $n$ have been fruitless.

$$(\cdots)$$


\end{document}